\numberwithin{equation}{section}
\newtheorem{thm}{Theorem}[section]
\newtheorem{lem}[thm]{Lemma}
\newtheorem{cor}[thm]{Corollary}
\newtheorem{rem}[thm]{Remark}
\newtheorem{definition}[thm]{Definition}
\newcommand{\R}{\mathbb{R}}
\newcommand{\C}{\mathbb{C}}
\newcommand{\N}{\mathbb{N}}
\newcommand{\Sh}{\mathcal{S}}
\newcommand{\dist}{\operatorname{dist}}
\newcommand{\ovdimB}{{\overline{\dim_B}}}
\newcommand{\lodimB}{{\underline{\dim_B}}}
\newcommand{\qad}{\dim_{\mathrm{qA}} }
\title{On concentric fractal spheres and spiral shells}
\author[Efstathios-K. Chrontsios-Garitsis]{Efstathios-K. Chrontsios-Garitsis}
\address{Department of Mathematics \\ University of Tennessee, Knoxville \\ 1403 Circle Dr \\ Knoxville, TN 37966}
\email{echronts@utk.edu, echronts@gmail.com}
\subjclass[2020]{Primary 28A80; Secondary 37C45, 54A20, 30C65}
\begin{document}

\maketitle

\begin{abstract}
We investigate dimension theoretic properties of concentric topological spheres, which are fractal sets emerging both in pure and applied mathematics. We calculate the box dimension and Assouad spectrum of such collections, and use them to prove that { fractal spheres cannot be shrunk through consecutive disjoint similar copies into a point at a polynomial rate}. We also apply these dimension estimates to quasiconformally classify certain spiral shells, a generalization of planar spirals in higher dimensions. This classification also provides a bi-H\"older map between shells and constitutes an addition to a general programme of research proposed by J. Fraser in \cite{Fra_spirals}.

\smallskip
\noindent \textbf{Keywords.} Fractal, dynamical systems, spiral, Assouad spectrum, quasiconformal mappings.
\end{abstract}

\section{Introduction}

Concentric objects occur in various areas of pure and applied mathematics. In complex dynamics, the Fatou components of entire transcendental maps are typically of fractal boundary and converge to infinity (see for instance \cite{Baker, ComplexDyn}). Therefore, under a conformal inversion that keeps most geometric properties intact, the components of the map can be seen as fractals ``centered" at $0$. In the context of harmonic analysis, concentric Euclidean spheres are closely associated to weights of measures that lie in the important Muckenhoupt class \cite{Carlos}. In fact, polynomially-concentric sphere collections, i.e., unions of spheres $S_n$ centered at $0$ with radii equal to $n^{-p}$, for some $p>0$, provide one of the very few examples of bounded sets whose distance function lies in the Muckenhoupt class, for certain exponents. 

A set $S\subset \R^d$ is called a \textit{topological sphere}, if there is a homeomorphism $f:\R^d\rightarrow \R^d$ which maps the unit Euclidean sphere centered at $0$, denoted by $S(0,1)\subset \R^d$, onto $S$. A topological sphere can potentially be smooth, or extremely ``rough". 
A quantitative classification between the two cases is a difficult problem of many interpretations. As a result, it has received attention within numerous fields, such as topology, analysis on metric spaces and geometric group theory (see \cite{Bonk_QC} for a detailed survey on the topic). Despite such difficulties on the theoretical front, concentric topological spheres provide examples of concentric fractals that emerge in various applied areas, such as mathematical physics \cite{Concentric_phys}, \cite{concentric_phys2}, machine learning \cite{Concentric_machine},  mathematical models \cite{Conc_eq_med-appl}, and mathematical biology \cite{Tyson_concentric}.

It might seem natural that very rough fractals, which are disjoint and centered at a fixed point, would have to decrease a lot in size while they approach the center. More specifically, in concentric  topological sphere collections, where the spheres are allowed to be fractals of high fractal dimension, they need to converge to their center significantly fast in order to avoid overlaps. Hence, for any concentric collection of topological spheres, there are  certain restrictions on either their dimension, or on how fast they accumulate around their center. One of our main results is addressing this phenomenon for fractal spheres, by studying certain dimension notions for the whole collection. For what follows, we fix an integer $d\geq 2$, and we denote by $B(0,1)\subset \R^d$ the open unit ball in $\R^d$ centered at $0$. We formalize the notion of concentric sphere collections in the following definition.

\begin{figure}
    \centering
    \includegraphics[width=0.45\linewidth]{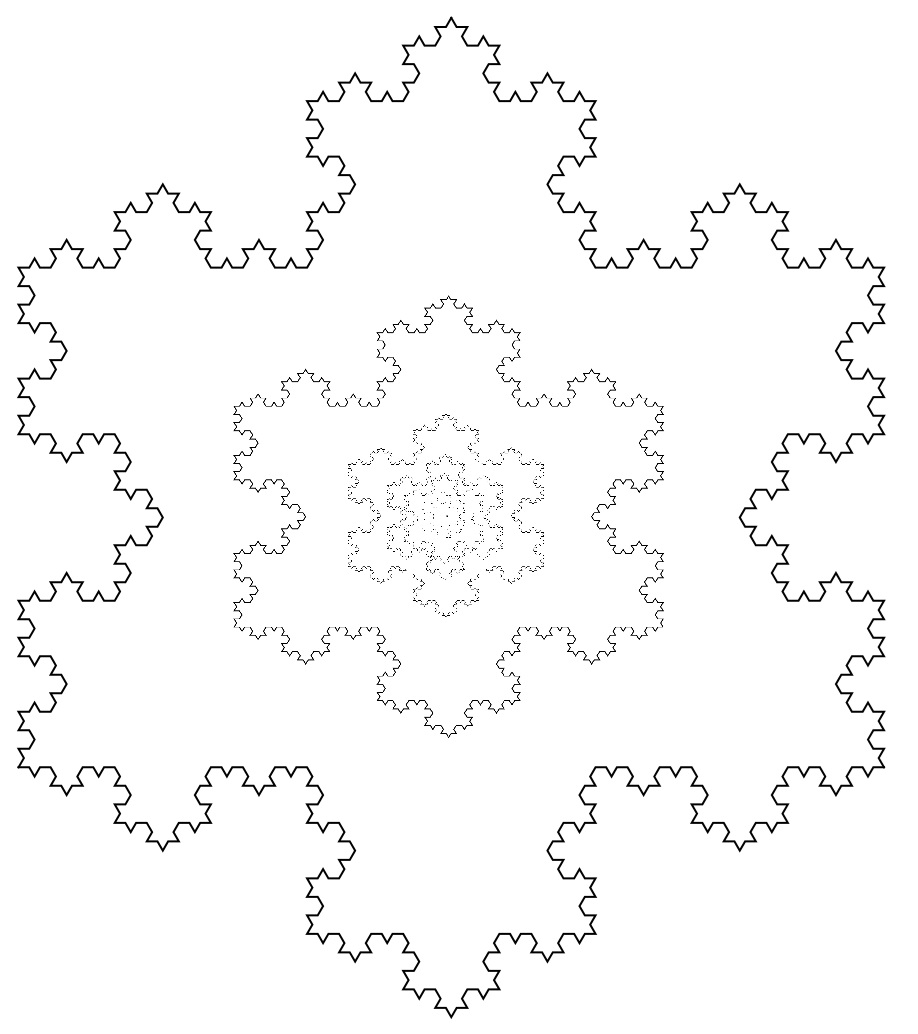}
    \caption{An example of a concentric collection of snowflakes.}
    \label{fig:snowflake}
\end{figure}

\begin{definition}\label{def: concentric}
    Let $S_0\subset \R^d$ be a topological sphere with $S_0= f(S(0,1))$, for some homeomorphism $f:\R^d \rightarrow \R^d$. For a sequence of similarities $\{g_n\}_{n\in\N}$, set $S_n:=g_n(S_0)$. We say that the union $S:=\bigcup_{n\in \N_0} S_n$ is a \textbf{concentric sphere collection} centered at $x_0$ and generated by $S_0$, {  for some $x_0\in f(B(0,1))$,} if there are $c_1, c_2\in (0,1]$, $n_0\in \N$, and a strictly decreasing sequence $\{a_n\}_{n\in \N}$ converging to $0$ such that:
    \begin{enumerate}[(i)]
        \item $d_H(S_n,\{x_0\})=c_1 a_n$, and
        \item $\dist(S_n,S_{n+1})=c_2 (a_n-a_{n+1}),$
    \end{enumerate} for all $n\geq n_0$.
    Moreover, if there is $p>0$ so that  $a_n=n^{-p}$, for all $n\geq n_0$, then we say that $S$ is a \textbf{$p$-(polynomially) concentric sphere collection} centered at $x_0$ and generated by $S_0$, and we denote it by $S_p=S_p(x_0,S_0)$.
\end{definition}

An example of a planar concentric sphere collection is depicted in Figure \ref{fig:snowflake}, where $S_0$ is the Koch snowflake (see \cite{falconer}). Note that in Definition \ref{def: concentric}, $\dist(S_n,S_{n+1})$ denotes the usual Euclidean distance between the sets $S_n,S_{n+1}$, and $d_H$ denotes the Hausdorff distance (see Section \ref{sec: Background} for the definition). The latter is a natural way to talk about convergence of a collection of sets to a point (see, for instance, \cite[Section 4.13]{Mat_book}, \cite[Chapter 5]{Fra_book}).
Therefore, one could perceive Definition \ref{def: concentric}  as the process of ``shrinking" the topological sphere $S_0$ into a point $x_0$, using contracting similarities $g_n$ of ratio approximately $a_n$.
Such problems have attracted a lot of interest from various areas of mathematics, and they have been extensively studied both from a deterministic \cite{Barany_dH_conv1, Barany_dH_conv2} and a probabilistic \cite{random_dH} point of view.
One of the main contributions of this paper is to emphasize that dimension-theoretic techniques can be crucial to the understanding of this phenomenon. Namely, we adopt such techniques in order to prove that a topological sphere $S_0$ of { non-trivial lower box dimension $\lodimB S_0$} (see Section \ref{sec: Background} for the definition) cannot give rise to polynomially concentric sphere collections of  any degree.

\begin{thm}\label{thm: no p-pol concentric sphere coll}
    Let $S_0\subset \R^d$ be a topological sphere with { $\lodimB S_0\in (d-1,d]$.} For any $p>0$, there is no $p$-concentric sphere collection generated by $S_0$.
\end{thm}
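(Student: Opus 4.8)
\emph{Proof strategy.} I would argue by contradiction, the mechanism being that a hypothetical $p$-concentric collection is \emph{too dense near its centre} to be a subset of $\R^d$. So suppose $S=S_p(x_0,S_0)=\bigcup_{n\ge 0}S_n$ is a $p$-concentric sphere collection generated by $S_0$ and put $s:=\dim_B S_0\in(d-1,d]$. Writing $N_\rho(\cdot)$ for the $\rho$-covering number, the only property of the ambient space I will use is the trivial estimate
\[
N_\rho\!\left(B(x_0,R)\cap S\right)\ \le\ N_\rho\!\left(B(x_0,R)\right)\ \le\ C_d\,(R/\rho)^{d}\qquad(0<\rho<R),
\]
with $C_d$ depending only on $d$; equivalently $\dim_{\mathrm{A}} S\le d$, hence $\dspec S\le d$ for all $\theta\in(0,1)$. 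The plan is to produce, for a carefully chosen exponent $\theta$, pairs of scales $\rho=R^{1/\theta}\to 0$ at which $N_\rho(B(x_0,R)\cap S)$ \emph{beats} $(R/\rho)^d$, contradicting the above --- i.e.\ to show $\dspec S>d$ for such a $\theta$.

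The first step is to unpack the geometry built into Definition~\ref{def: concentric}. For $n\ge n_0$ the spheres $S_n$ form a nested family converging to $x_0$, and each $g_n$ has similarity ratio $r_n\asymp a_n=n^{-p}$; this is exactly the ``contracting similarities of ratio approximately $a_n$'' reading of (i)--(ii) --- concretely, a topological sphere that surrounds $x_0$ and touches $\partial B(x_0,c_1a_n)$ has diameter at least $c_1a_n$, while (i) puts it inside $\overline{B}(x_0,c_1a_n)$, so $\diam S_n\asymp n^{-p}$. Since $a_n-a_{n+1}=n^{-p}-(n+1)^{-p}\asymp n^{-p-1}$, condition (ii) gives $\dist(S_n,S_{n+1})\asymp n^{-p-1}$, and nestedness forces $\dist(S_n,S_m)\ge\dist(S_n,S_{n+1})$ for $m>n$. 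Finally, each $S_n$ being a similarity copy of $S_0$ with ratio $r_n$, we have $N_\rho(S_n)=N_{\rho/r_n}(S_0)\gtrsim_\varepsilon(n^{-p}/\rho)^{\,s-\varepsilon}$ (the last step using $\dim_B S_0=s$) whenever $\rho$ is small compared with $n^{-p}$.

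Now fix a small $R>0$, a parameter $\theta\in(0,\tfrac{p}{p+1})$ to be pinned down later, and set $\rho:=R^{1/\theta}$. Let $n_1\asymp R^{-1/p}$ be the least index with $S_n\subset B(x_0,R)$ (guaranteed by $\sup_{y\in S_n}|y-x_0|=c_1n^{-p}$), and let $n_2\asymp\rho^{-1/(p+1)}$ be an index with $\dist(S_n,S_{n+1})\ge 3\rho$ for all $n\in[n_0,n_2]$. Because $\theta<\tfrac p{p+1}$, we have $n_1\ll n_2$ once $R$ is small, the range $[n_1,n_2]$ contains $\asymp\rho^{-1/(p+1)}$ integers, and for each such $n$ the resolution $\rho/n^{-p}$ is at most $\asymp\rho^{1/(p+1)}$, so the scaling estimate for $N_\rho(S_n)$ applies. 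Since a $\rho$-cube can meet at most one $S_n$ with $n\le n_2$ (consecutive gaps exceed $3\rho$, and by nestedness non-consecutive gaps are even larger), adding up the disjoint contributions yields
\[
N_\rho\!\left(B(x_0,R)\cap S\right)\ \ge\ \sum_{n=n_1}^{n_2}N_\rho(S_n)\ \gtrsim_\varepsilon\ \rho^{-(s-\varepsilon)}\sum_{n=n_1}^{n_2}n^{-p(s-\varepsilon)}.
\]

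To finish, I would evaluate that sum --- it is comparable to $n_1^{\,1-p(s-\varepsilon)}$, to $n_2^{\,1-p(s-\varepsilon)}$, or to $\log(n_2/n_1)$ according as $p(s-\varepsilon)$ is $>1$, $<1$, or $=1$ --- and in each regime substitute $n_1\asymp R^{-1/p}$, $n_2\asymp R^{-1/(\theta(p+1))}$, $\rho=R^{1/\theta}$ and compare with $(R/\rho)^d=R^{-d(1/\theta-1)}$. An elementary computation then shows (letting $\varepsilon\to0$) that $N_\rho(B(x_0,R)\cap S)\gg(R/\rho)^d$ for all small $R$ precisely when
\[
\theta\ >\ \theta_\ast(s,p)\ :=\ \begin{cases}\dfrac{p(d-s)}{\,p(d-s)+1\,}, & ps\ge 1,\\ 1-\dfrac{s+1}{\,d(p+1)\,}, & ps\le 1,\end{cases}
\]
the two formulas agreeing when $ps=1$; and a one-line cross-multiplication shows $\theta_\ast(s,p)<\tfrac p{p+1}\iff s>d-1$. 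Thus, under the hypothesis $\dim_B S_0>d-1$, choosing $\theta$ slightly below $\tfrac p{p+1}$ (so that $\theta>\theta_\ast(s,p)$) and then $\varepsilon>0$ small enough --- an open condition --- contradicts the trivial bound, and no $p$-concentric collection generated by $S_0$ can exist. The obstacle I expect is bookkeeping rather than ideas: one must extract cleanly from Definition~\ref{def: concentric} the three facts $r_n\asymp a_n$, nestedness, and $\dist(S_n,S_{n+1})\asymp n^{-p-1}$ (the ratio comparison keeps every shell genuinely $n^{-p}$-large, so its fractality is still visible at scale $\rho$; nestedness makes the covering numbers add rather than collide), and then run the three exponent regimes so the single threshold $s>d-1$ falls out of all of them. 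A shorter route, once the Assouad spectrum of concentric collections has been computed in its own right, is to quote that formula and note that for $\theta$ near $\tfrac p{p+1}$ it exceeds $d$ exactly when $\dim_B S_0>d-1$, whereas $\dspec S\le\dim_{\mathrm{A}} S\le d$ for every $S\subset\R^d$.
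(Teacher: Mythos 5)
Your proposal is correct and follows essentially the same strategy as the paper's proof: a covering argument at scales $\rho$ and $R=\rho^\theta$ (your $n_1$, $n_2$ are the paper's $m_r$, $n_r$ with $r=\rho$), a case split on $ps$ versus $1$, and the observation that the resulting lower bound on $\dim_A^\theta S$ exceeds $d$ for $\theta$ slightly below $p/(p+1)$ precisely when $\dim_B S_0 > d-1$; your formulas for $\theta_*$ and the cross-multiplication all match the paper's inequalities. One minor caveat on your parenthetical ``shorter route'': Theorem~\ref{thm: A-spec of S_p} is only proved under the hypothesis $\dim_A S_0 = d-1$ (which forces $\dim_B S_0 = d-1$), so it cannot be quoted directly to dispatch the case $\dim_B S_0 > d-1$ --- indeed the very conclusion of Theorem~\ref{thm: no p-pol concentric sphere coll} is that no such collection exists to have its spectrum computed, so the paper necessarily re-runs the lower-bound estimate rather than citing the spectrum formula.
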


It should be noted that it is always possible to shrink a topological sphere $S_0$ into a point through a sequence of similarities as described in Definition \ref{def: concentric}, as long as the sequence $a_n$ converges fast enough to $0$. See Section \ref{sec: Final Remarks} for a detailed discussion. Theorem \ref{thm: no p-pol concentric sphere coll} establishes that sequences of the form $n^{-p}$ (or of slower rate of convergence to $0$) are simply not fast enough to shrink a topological sphere of non-trivial {  lower} box dimension. Hence, all concentric fractal spheres encountered in the aforementioned applications \cite{Concentric_phys, concentric_phys2, Concentric_machine, Conc_eq_med-appl} either ``smoothen" out as they approach the common center, or they shrink at an exponential rate. 
On the other hand, there are still various examples of topological spheres in $\R^d$ of box dimension equal to $d-1$ that generate polynomially concentric collections. {  For instance, the boundary of any convex regular polygon can be polynomially shrunk into its center of mass. }

The proof of Theorem \ref{thm: no p-pol concentric sphere coll} is achieved by assuming towards contradiction that for $d_0=\lodimB S_0>d-1$ and $p>0$, a $p$-concentric sphere collection $S_p(x_0,S_0)\subset \R^d$ exists. We then use dimension theoretic tools, and specifically the Assouad spectrum notion $\dim_A^\theta S_p$ introduced by Fraser-Yu \cite{Spectraa} (see Section \ref{sec: Background} for the definition), in order to show that such a collection would be of dimension larger than $d$,  surpassing the dimension of the ambient space $\R^d$ and leading to a contradiction. A partial version of Theorem \ref{thm: no p-pol concentric sphere coll} can be proved by providing lower bounds for the upper box dimension of such a collection, which  results in a contradiction only for certain values of $p$. Hence, the Assouad spectrum is  essential in achieving the result in full generality with this approach. Theorem \ref{thm: no p-pol concentric sphere coll} is the first application of the Assouad spectrum in this context of ``rigidity" of fractal collections.
Moreover, the property of the Assouad spectrum to trace ``finer" geometric data is also crucial for  ``classification" problems under families of mappings. We establish Assouad spectrum estimates for concentric sphere collections (see Section \ref{sec: dimensions}), which we apply to classify certain objects, up to classes of homeomorphisms.

An instance of a planar set resembling a collection of concentric circles is the spiral of the form
$$
\Sh(\phi):=\{ \phi(t) e^{it}: \, t>1 \},
$$ where $\phi:[1,\infty)\rightarrow(0,\infty)$ is a continuous, strictly decreasing function, with $\lim_{t\rightarrow \infty}\phi(t)=0$. Indeed,  such spirals share many geometric properties with  concentric collections of circles of the form $\cup_{n\in \N} S(0,\phi(n))$, while they hold a prominent role in fluid turbulence \cite{foi, moff, vass, vasshunt}, dynamical systems \cite{zub, spirals_ode}, and even certain types of models in mathematical biology \cite{Tyson_spirals, Shell_Mathbio}. Moreover, they provide examples of ``non-intuitive" fractal behavior (see \cite{dup}), while they have also been extensively studied due to their unexpected analytic properties. For instance, Katznelson-Nag-Sullivan \cite{unwindspirals} demonstrated the dual nature of such spirals, lying in-between smoothness and ``roughness", as well as their connection to certain Riemann mapping questions. The relation between the rate of convergence of $\phi(t)$ for $t\rightarrow \infty$ and the existence of Lipschitz and H\"older parametrizations has also been extensively studied by the aforementioned authors in \cite{unwindspirals}, by Fish-Paunescu in \cite{spirals}, and by Fraser in \cite{Fra_spirals}. In particular, Fraser in the latter paper focuses on spirals that resemble polynomially concentric circles, i.e., spirals where $\phi(t)=t^{-p}$, for $p>0$, and emphasizes further the ``fractal" nature of these objects, due to the peculiar behavior they exhibit in relation to certain dimension notions.

A natural higher dimensional generalization of spirals is the notion of spiral shells in $\R^3$. They exhibit similar behavior to spirals, which has attracted the interest of researchers in physics \cite{Shell_phys, Shell_engineer} and mathematical biology \cite{Tyson_spirals, Shell_Mathbio}. For $p>0$, we consider \textit{polynomial (spiral) shells} of the form
$$
\Sh_p:= \{ (u^{-p}\cos u \sin v, u^{-p}\sin u \sin v, u^{-p}\cos v)\in \R^3: \, u,\, v\in [1,\infty) \}\subset \R^3.
$$ The resemblance to the polynomial spirals mentioned above is perhaps already evident from the definition. In addition, spiral shells have a very similar geometric structure to $p$-polynomially concentric spheres in $\R^3$. See  Figure \ref{fig: shells}.

\begin{figure}[H]
\centering
\begin{subfigure}{.4\textwidth}
  \centering
  \includegraphics[width=\linewidth]{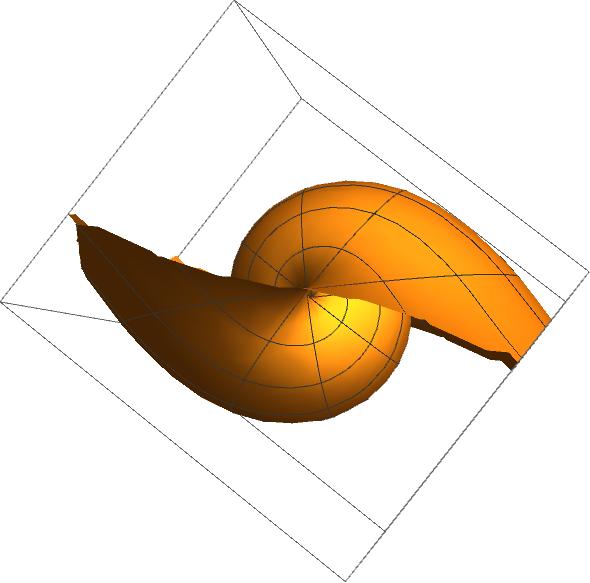}
\end{subfigure}
\begin{subfigure}{.4\textwidth}
  \centering
  \includegraphics[width=\linewidth]{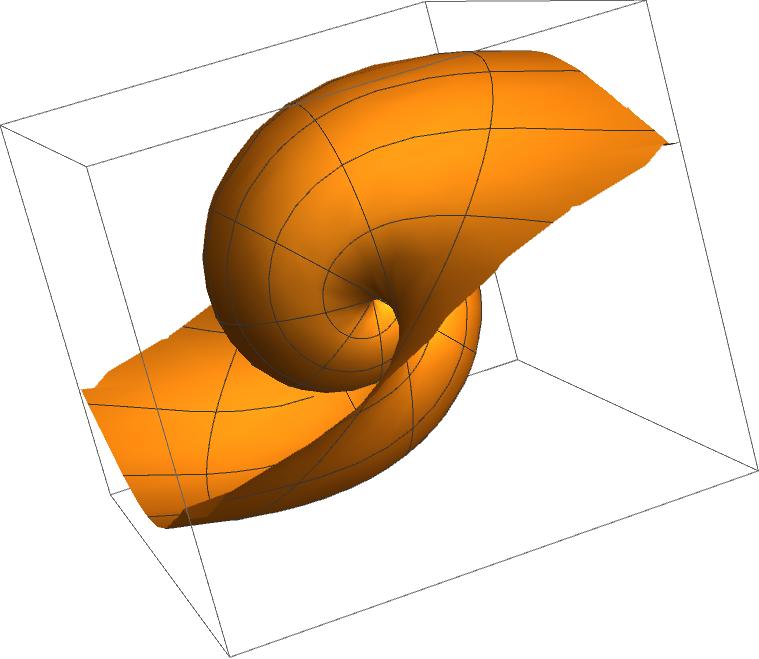}
\end{subfigure}
\caption{An example of a polynomial spiral shell for $p=1/2$.}
\label{fig: shells}
\end{figure}

One significant restriction in the analytic and geometric study of these objects is the lack of planar tools in $\R^3$. For instance, Liouville's theorem states that all conformal maps are M\"obius transformations in $\R^3$ (see for instance \cite[p.~15, 43]{Vais_book}). This was one of the motivations for the definition of \textit{quasiconformal maps} in $\R^d$, $d\geq 2$. More specifically, while conformal maps transform infinitesimal balls into infinitesimal balls, quasiconformal maps transform them into infinitesimal ellipsoids of globally bounded eccentricity (see Section \ref{sec: QC-Holder parametrization} for formal definition). The bound on said eccentricity is called \textit{dilatation}. If the dilatation of a quasiconformal map $f$ is at most some uniform constant $K\geq 1$, we say that $f$ is a \textit{$K$-quasiconformal map}. The theory of quasiconformal maps has a rich history, which still continues to develop actively, especially due to its many connections and applications within various fields of mathematics. We refer to the books by Lehto and Virtanen \cite{Leh_QC}, V\"ais\"al\"a \cite{Vais_book}, and Heinonen \cite{Heinonen} for thorough expositions on these maps and their importance in $\C$, $\R^d$, and metric spaces, respectively.

Distinguishing sets up to quasiconformal maps, particularly at the level of the dilatation, is in general a hard problem.
For instance, determining under what assumptions a topological sphere is a quasiconformal image of the usual unit sphere $S(0,1)$ is a long-standing open problem, with many implications in geometric group theory and topology (see \cite{Bonk_QC} for a survey on this topic and connections to Cannon's conjecture, see also \cite{Bonk_Meyer}). For planar topological spheres, i.e., simple closed curves homeomorphic to the unit circle, this classification problem was solved by Ahlfors in \cite{Ahlfors}, and their dimension-theoretic properties were fully determined by Smirnov in \cite{Smirnov}. In higher dimensions only partial progress has been made; see \cite{Bonk_Meyer, Meyer_Snowb} and the references therein.

Given the ties of spiral shells to several applications, as well as their relation to concentric topological spheres, it is natural to try and classify them under quasiconformal maps. Indeed, this is the outcome of our second main result. By applying the Assouad spectrum estimates we establish for concentric spheres (see Theorem \ref{thm: A-spec of S_p}), along with the dimension distortion properties of quasiconformal maps, proved by Tyson and the author in \cite{OurQCspec}, we achieve the desired classification of polynomial shells.

\begin{thm}\label{thm: main QC shells}
    For $p\geq q>0$, let $\Sh_p$ and $\Sh_q$ be two polynomial spiral shells. There is a $K$-quasiconformal mapping $F:\R^d\rightarrow \R^d$ with $F(S_p)=S_q$ if, and only if, $K\geq p/q$.
\end{thm}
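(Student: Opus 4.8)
My plan is to reduce the classification to a single invariant of $\Sh_p$, namely the \emph{transition point} of its Assouad spectrum, to compute it, and then to control how much a $K$-quasiconformal map can displace it. The first step is to determine $\dim_{\mathrm{A}}^{\theta}\Sh_p$. I would do this by comparing $\Sh_p$ with the $p$-concentric sphere collection $S_p=S_p(0,S(0,1))\subset\R^3$ generated by a round sphere: although $\Sh_p$ is not a union of full spheres but of great circles whose planes rotate slowly with $u$, over $u$-windows these great circles densify exactly as round spheres do, so a direct comparison of the covering numbers $N_r(B(x,R)\cap\Sh_p)$ and $N_r(B(x,R)\cap S_p)$, carried out at all scales $r\le R$ and at all centres $x$ (not merely at the accumulation point), shows they agree up to multiplicative constants; the only discrepancy, the polar $z$-axis segment contained in $\Sh_p$, is a bounded $1$-dimensional set and hence dimensionally invisible. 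Feeding this into Theorem~\ref{thm: A-spec of S_p} yields an explicit formula for $\dim_{\mathrm{A}}^{\theta}\Sh_p$; the only feature I will use is that, with $\theta_p:=p/(p+1)$, one has $\dim_{\mathrm{A}}^{\theta}\Sh_p<d$ for $\theta<\theta_p$ and $\dim_{\mathrm{A}}^{\theta}\Sh_p=d$ for $\theta\ge\theta_p$ (the value $\theta_p$ being where the nested sheets near the centre become $r$-dense in $B(0,r^{p/(p+1)})$, forcing the spectrum up to the ambient dimension). Equivalently, the transition point $\theta_{\Sh_p}$ satisfies $\theta_{\Sh_p}/(1-\theta_{\Sh_p})=p$, so $p$ is read off from the Assouad spectrum of $\Sh_p$.

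For the sufficiency direction I would simply exhibit the radial stretch $F(x)=|x|^{q/p-1}x$ with $F(0)=0$. This is a homeomorphism of $\R^d$; away from the origin it maps each sphere about $0$ to a sphere by a homothety and stretches the radial direction by the factor $q/p\le 1$ relative to the tangential ones, so its dilatation is the constant $p/q$ and the origin is a removable point, making $F$ a $(p/q)$-quasiconformal map. Writing a point of $\Sh_p$ as $u^{-p}\omega$ with $\omega$ a unit vector, we get $F(u^{-p}\omega)=(u^{-p})^{q/p}\omega=u^{-q}\omega$, whence $F(\Sh_p)=\Sh_q$ exactly (this $F$ is moreover bi-H\"older, as advertised in the introduction). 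Since any $K\ge p/q$ is a valid dilatation bound for $F$, the "if" direction is complete.

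For the necessity direction, suppose $F\colon\R^d\to\R^d$ is $K$-quasiconformal with $F(\Sh_p)=\Sh_q$. Since $F$ is a homeomorphism and $\overline{\Sh_p}=\Sh_p\cup\{0\}$, $\overline{\Sh_q}=\Sh_q\cup\{0\}$ with $0\notin\Sh_p\cup\Sh_q$, necessarily $F(0)=0$. I then invoke the quasiconformal distortion of the Assouad spectrum proved by Tyson and the author in \cite{OurQCspec}: a $K$-quasiconformal self-map of $\R^d$ displaces the transition point of the Assouad spectrum only by a controlled amount, and precisely in the "$\theta/(1-\theta)$"-coordinate it is multiplied by a factor lying in $[1/K,K]$ — this is the content of, or an immediate consequence of, the distortion estimate established there, and the radial stretch of the previous paragraph shows it is sharp. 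Applying this with $E=\Sh_p$ and using the first step gives $q\in[p/K,Kp]$; since $p\ge q$, the operative inequality is $q\ge p/K$, i.e.\ $K\ge p/q$. Together with the sufficiency direction, this proves the theorem.

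I expect the first step to be the main obstacle: transferring the Assouad spectrum from the round collection $S_p$ to the genuinely different set $\Sh_p$ requires uniform covering estimates at \emph{all} centres, not only at the accumulation point where the computation is easiest, together with care at the polar degeneracy. The secondary delicate point is extracting from the general theorem of \cite{OurQCspec} exactly the sharp statement about the transition point; the radial stretch shows this extraction loses nothing, so the two halves meet precisely at the threshold $K=p/q$.
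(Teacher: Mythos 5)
Your overall strategy coincides with the paper's: compute the Assouad spectrum of $\Sh_p$ by comparison with the $p$-concentric sphere collection $S_p$, exhibit the radial stretch $F(x)=|x|^{q/p-1}x$ for the ``if'' direction, and invoke the quasiconformal Assouad spectrum distortion theorem from \cite{OurQCspec} for the ``only if''. The only genuine divergence is in how the dimension computation is transferred from $S_p$ to $\Sh_p$. The paper does this by splitting $\Sh_p$ and $S_p$ into upper and lower halves relative to $\{x_3\geq 0\}$ and constructing bi-Lipschitz maps between the corresponding halves (in the spirit of \cite{Falc_spirals, Fra_spirals}); bi-Lipschitz and finite stability of the Assouad spectrum then give $\dim_A^\theta\Sh_p=\dim_A^\theta S_p$ cleanly. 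You instead assert a direct pointwise comparison of covering numbers $N_r(B(x,R)\cap\Sh_p)$ and $N_r(B(x,R)\cap S_p)$ ``at all centres''. As written this does not quite parse, since $\Sh_p$ and $S_p$ occupy different points of $\R^3$ and a ball $B(x,R)$ centered on one set need not meet the other; what you really need is a correspondence $x\mapsto y$, $R\mapsto R'$ realizing such a comparison, which is precisely what the paper's bi-Lipschitz maps provide. So your idea is morally the paper's, but to carry it out you would in effect have to reconstruct those maps, and the splitting into two halves (not just excising the polar axis) is needed to make the unwinding well-defined since $\Sh_p$ is a single connected spiralling sheet while $S_p$ is a disjoint union of spheres. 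Your repackaging of \cite[Theorem~1.3]{OurQCspec} as a statement that the transition point, read in the coordinate $\theta/(1-\theta)$, can only be multiplied by a factor in $[1/K,K]$ is correct and is exactly what the paper exploits by taking $t=1/q$: the left-hand inequality alone forces $q\geq p/K$, the right-hand inequality gives $q\leq pK$, and with $p\geq q$ the first is the binding one. The observation $F(0)=0$ is true but is not needed, since Theorem~\ref{thm: ChrTys} only requires $\overline{\Sh_p}\subset\Omega=\R^d$.
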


Note that quasiconformal maps are also locally H\"older continuous, with locally H\"older continuous inverse (see Section \ref{sec: QC-Holder parametrization} for definitions). Therefore, in the proof of Theorem \ref{thm: main QC shells}, the quasiconformal map we construct, mapping $\Sh_p$ onto $\Sh_q$, is a bi-H\"older map between polynomial spiral shells, with the H\"older exponents quantitatively depending on the contracting ratios of the shells $p, q$. This contributes to a general direction proposed by J. Fraser in \cite{Fra_spirals}, on bi-H\"older classification of sets in $\R^d$. We discuss this connection further in Section \ref{sec: Final Remarks}.

This paper is organized as follows. Section \ref{sec: Background} includes the notation we follow, and all the relevant dimension notions and their properties. In Section \ref{sec: dimensions} we prove precise estimates for the upper box dimension and Assouad spectrum of $p$-concentric sphere collections, and apply similar dimension-theoretic arguments to prove Theorem \ref{thm: no p-pol concentric sphere coll}. In Section \ref{sec: QC-Holder parametrization} we recall the formal definition of quasiconformality, and state the relevant dimension distortion result we need for quasiconformal maps. Moreover, we demonstrate the shared dimension-theoretic properties between polynomially concentric spheres and spiral shells, which allows us to apply the quasiconformal dimension distortion result from \cite{OurQCspec} in order to prove Theorem \ref{thm: main QC shells}. Section \ref{sec: Final Remarks} contains further remarks and potential future directions that relate to this work.

\subsection*{Acknowledgements}
The  author wishes to thank Vasiliki Evdoridou, Manisha Garg and Carlos Mudarra for the interesting and motivating discussions on this work. Furthermore, the author thanks the anonymous referee for their valuable comments that significantly improved the manuscript.

\section{Background}\label{sec: Background}
We first establish the notation we follow.
Given two positive functions $h_1, h_2:(0,\infty)\rightarrow (0,\infty),$ we write $h_1(r) \lesssim h_2(r)$ if there is $C>0$, independent of $r$, such that $h_1(r)\leq C h_2(r)$, for all $r>0$. We call $C>0$ the \textit{comparability constant} of the relation $h_1(r) \lesssim h_2(r)$.
We write $h_1(r) \gtrsim h_2(r)$ if $h_2(r) \lesssim h_1(r)$. Lastly, we write $h_1(r) \simeq h_2(r)$ if both $h_1(r) \lesssim h_2(r)$ and $h_1(r) \gtrsim h_2(r)$ hold. Note that $h_1(r) \simeq h_2(r)$ in general implies that there are $C_1, C_2>0$, potentially distinct, such that $C_1 h_1(r)\leq h_2(r)\leq C_2 h_1(r)$ for all $r>0$. However, this implies that there is a single $C=\max\{ C_1^{-1}, C_2 \}>0$ such that $C^{-1} h_1(r)\leq h_2(r)\leq C h_1(r)$ for all $r>0$. In this case, we say that $C$ is the \textit{comparability constant} of the relation $h_1(r) \simeq h_2(r)$. 
Given $x\in \R^d$, $r>0$, we denote by $B(x,r)$ the open ball centered at $x$ of radius $r$, and by $S(x,r)=\partial B(x,r)$ the corresponding sphere in $\R^d$. Recall that a set $S\subset \R^d$ is called a \textit{topological sphere}, if there is a homeomorphism $f:\R^d\rightarrow\R^d$ with $S=f(S(0,1))$.
Given two compact sets $A,B\subset\R^d$, we denote their usual distance by 
{ $$
\dist(A,B)=\inf\{ |a-b|:\,a\in A,\, b\in B \},
$$ }
and their Hausdorff distance by
$$
d_H(A,B)=\max \Bigl\{ \sup\{\dist(\{a\},B):\, a\in A\},\, \sup\{\dist(A,\{b\}): \,b\in B\}\Bigr\}.
$$

We next define the dimension notions we need, along with their properties. We refer to the books by Falconer \cite{falconer} and Fraser \cite{Fra_book} for thorough expositions on these topics.
Let $E \subset \mathbb{R}^d$ be a non-empty bounded set. For $r>0$, we denote by $N_r (E)$ the smallest number of $r$-cubes needed to cover $E$, i.e., cubes that lie in the axes-oriented $r$-mesh of $\R^d$ and intersect $E$ (see \cite[p.~8]{Fra_book}).
The \emph{lower} and \emph{upper box dimensions} of $E$ are defined by
$$
\underline{\dim_B} E = \liminf_{ r \to 0} \, \frac{\log N_r (E)}{-\log  r}
\qquad
\text{and}
\qquad
\overline{\dim_{B}} E = \limsup_{ r  \to 0} \,  \frac{\log N_r (E)}{-\log  r},
$$
respectively.  If the lower and upper box dimensions are equal,  we call the common value the \emph{box dimension} of $E$, and denote it by $\dim_{B} E$. 

{ Note that if $\lodimB E=d_0^-$ and $\ovdimB E=d_0^+$, then for all $\epsilon>0$ there is $r_0=r_0(\epsilon)\in (0,1)$ such that
\begin{equation}\label{eq: N_r rel by dimB}
    r^{-d_0^-+\epsilon}\leq N_r(E)\leq r^{-d_0^+-\epsilon},
\end{equation} for all $r<r_0$. }

Recall that a map $g:\R^d\rightarrow\R^d$ is a \textit{similarity} if $g(x)=c_g \mathbf{A}x+t$ for all $x\in \R^d$, where $c_g>0$, $\mathbf{A}\in \mathcal{O}(\R,d)$ is a real orthogonal matrix, and $t\in \R^d$. We call the scalar $c_g$ the \textit{similarity ratio} of $g$. A standard property of the covering number $N_r(E)$ that we need is the way it is distorted by similarities. More specifically, if $g$ is a similarity map with similarity ratio $c$, then
$$
{  N_r(g(E))\simeq N_{r/c}(E)},
$$ for all $r\in (0,1)$, and some comparability constant that only depends on $d$ (see, for instance, \cite{falconer} Proposition 2.5). { If $\lodimB E=d_0^-$ and $\ovdimB E=d_0^+$, using the above and \eqref{eq: N_r rel by dimB} implies
\begin{equation}\label{eq: N_r similar}
    \left(\frac{r}{c}\right)^{-d_0^-+\epsilon}\lesssim N_r(g(E))\lesssim \left(\frac{r}{c}\right)^{-d_0^+-\epsilon},
\end{equation}} for all $r<\min\{c r_0, r_0\}$. This gives a relation between covering numbers of similar topological spheres that is crucial in our arguments in Section \ref{sec: dimensions}.

For an arbitrary (not necessarily bounded) set $E \subset \R^d$, the \textit{Assouad dimension} of $E$ is
$$
\dim_A E = \inf \left\{\alpha>0 \,:\, {\exists\,C>0\mbox{ s.t. } N_r(B(x,R) \cap E) \le C (R/r)^{\alpha} \atop \mbox{ for all $0<r\le R$ and all $x \in E$}} \right\}.
$$ While we do not use the Assouad dimension definition directly, we include it for the sake of completeness, and to relate it to the notion of the Assouad spectrum.

The Assouad spectrum, introduced by Fraser and Yu \cite{Spectraa}, is a one-parameter family of metrically defined dimensions which interpolates between the upper box dimension and the Assouad dimension. Specifically, the \textit{Assouad spectrum} of a set $E \subset \R^d$ { is a function $\theta \mapsto \dim_A^\theta E$}, where 
$$
\dim_A^\theta E=\inf \left\{\alpha>0 \,:\, {\exists\,C>0\mbox{ s.t. } N_r(B(x,r^\theta) \cap E) \le C (r^\theta/r)^{\alpha} \atop \mbox{ for all $0<r< 1$ and all $x \in E$}} \right\},
$$ for all $\theta\in (0,1)$. We may also refer to the value $\dim_A^\theta E$ as the \textit{$(\theta)$-Assouad spectrum} of E, for a specific  $\theta\in (0,1)$.

The Assouad spectrum essentially captures the growth rate of the covering number $N_r(B(x,R)\cap E)$ for scales $0<r\le R<1$ related by $R = r^\theta$. The map $\theta \mapsto \dim_A^\theta E$ is continuous at all $0<\theta<1$  \cite[Section 3.3.3]{Fra_book}, and for a fixed bounded set $E\subset \R^d$ we have
$$
\dim_A^\theta E \to \ovdimB E \quad \mbox{as $\theta \to 0$}, \qquad \dim_A^\theta E \to \dim_{qA}E \quad \mbox{as $\theta \to 1$}  ,
$$ 
where $\dim_{qA}E$ denotes the {\it quasi-Assouad dimension} of $E$, a variant of Assouad dimension introduced by L\"u and Xi \cite{quasi}. We always have $\dim_{qA} E \le \dim_A E$, and equality holds in many situations, including all sets of interest in this paper. We refer to \cite[Chapter 3]{Fra_book} for proofs of the above properties.

In Section \ref{sec: QC-Holder parametrization} we use a slightly modified version of the Assouad spectrum, where the relationship $R=r^\theta$ between the two scales of the Assouad dimension definition is relaxed to an inequality $R\ge r^\theta$. Namely, given $E\subset\R^d$ and $\theta\in (0,1)$, we set
$$
\dim_{A,reg}^\theta E:=\inf \left\{\alpha>0 \,:\, {\exists\,C>0\mbox{ s.t. } N_r(B(x,R) \cap E) \le C (R/r)^{\alpha} \atop \mbox{ for all $0<r\leq R^{1/\theta}<R<1$ and all $x \in E$}} \right\},
$$
This modification leads to the notion of {\it upper}, or {\it regularized Assouad spectrum}. See \cite{Spectraa} and \cite[Section 3.3.2]{Fra_book} for more information. Tyson and the author focused on this notion in \cite{OurQCspec}, as it aligns more naturally with properties of the Assouad dimension that are fundamental in dimension distortion results (see also \cite{HolomSpecChron}). For a fixed $\theta \in (0,1)$, the key relationship between the respective  values of the two spectra is the following (see  \cite[Theorem 3.3.6]{Fra_book}) 
\begin{equation}\label{eq:regularization}
{\dim_{A, reg}^\theta} E = \sup_{0<\theta'<\theta} \dim_A^{\theta'}E.
\end{equation} It is immediate from \eqref{eq:regularization} that if $\dim_A^\theta E$ is increasing in $\theta$, then $\dim_{A, reg}^\theta E=\dim_A^\theta E$. This is particularly useful, since $\dim_A^\theta E$ is often easier to calculate, while the dimension distortion results we use in Section \ref{sec: QC-Holder parametrization} are proved for the regularized Assouad spectrum.

Moreover, the relation between the different dimensions for a fixed bounded set $E$ follows by \eqref{eq:regularization} and the respective definitions. Namely,
\begin{equation} \label{eq: spec bounds}
\ovdimB E \leq \dim_A^\theta E\leq \dim_{A, reg}^\theta E \leq  \qad E \leq \dim_A E,
\end{equation} for all $\theta\in (0,1)$. We also state an upper bound on the Assouad spectrum involving the {  upper} box dimension, which reduces certain calculations in  Section \ref{sec: dimensions} (see \cite[Lemma 3.4.4]{Fra_book}).
\begin{lem}\label{le: dimA upper bd}
    {  Given a bounded non-empty set $E\subset \R^d$, we have
    $$
    \dim_A^\theta E\leq  \min\left\{ \frac{\ovdimB E}{1-\theta},\, d \right\},
    $$ for all $\theta\in (0,1)$.}
\end{lem}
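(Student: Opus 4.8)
The plan is to establish the two bounds $\dim_A^\theta E \le d$ and $\dim_A^\theta E \le d_0/(1-\theta)$ separately, working directly from the definition of the Assouad spectrum together with the box-dimension estimate \eqref{eq: N_r rel by dimB}.

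For the first bound I would use only that $B(x,r^\theta)$ is a ball of radius $r^\theta$: a standard packing argument gives $N_r(B(x,r^\theta)) \lesssim (r^\theta/r)^d$ with comparability constant depending only on $d$, and since $N_r(B(x,r^\theta)\cap E) \le N_r(B(x,r^\theta))$ for every $x\in E$ and every $0<r<1$, the definition of $\dim_A^\theta$ yields $\dim_A^\theta E \le d$ at once.

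For the second bound, fix $\epsilon>0$ and let $r_0=r_0(\epsilon)\in(0,1)$ be as in \eqref{eq: N_r rel by dimB}, so that $N_r(E)\le r^{-d_0-\epsilon}$ for all $r<r_0$. For $0<r<r_0$, monotonicity of the covering number gives
\[
N_r\big(B(x,r^\theta)\cap E\big)\le N_r(E)\le r^{-d_0-\epsilon}=\big(r^\theta/r\big)^{(d_0+\epsilon)/(1-\theta)},
\]
since $(\theta-1)\cdot\tfrac{d_0+\epsilon}{1-\theta}=-(d_0+\epsilon)$. For the remaining scales $r\in[r_0,1)$, the set $E$ is bounded, so $N_r(E)$ is bounded above by a constant $M=M(\epsilon,E,d)$ depending only on $\diam E$ and $r_0$; as $r^\theta/r=r^{\theta-1}\ge 1$ for $r\in(0,1)$, we also get $N_r(B(x,r^\theta)\cap E)\le M\le M\,(r^\theta/r)^{(d_0+\epsilon)/(1-\theta)}$ there. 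Taking the comparability constant $C:=\max\{1,M\}$ shows $\dim_A^\theta E\le (d_0+\epsilon)/(1-\theta)$, and letting $\epsilon\to 0$ gives $\dim_A^\theta E\le d_0/(1-\theta)$. Combining the two bounds yields the claim.

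I do not anticipate a genuine obstacle: the argument is a direct unwinding of the definitions. The only point needing a little care is the bookkeeping for the scales $r\ge r_0$, which lie outside the range where \eqref{eq: N_r rel by dimB} is available; this is harmless, since covering numbers of a bounded set at scales bounded away from $0$ are uniformly bounded, while the comparison quantity $(r^\theta/r)^\alpha$ is at least $1$ on that range.
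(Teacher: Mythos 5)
The paper does not prove this lemma itself; it cites \cite[Lemma 3.4.4]{Fra_book}. Your argument is correct and is essentially the standard direct proof one would expect to find there: the bound $\dim_A^\theta E\le d$ comes from crudely covering the ball $B(x,r^\theta)$ by $\lesssim (r^{\theta-1})^d$ many $r$-cubes, and the bound $\dim_A^\theta E\le d_0/(1-\theta)$ comes from $N_r(B(x,r^\theta)\cap E)\le N_r(E)\le r^{-d_0-\epsilon}=(r^{\theta-1})^{(d_0+\epsilon)/(1-\theta)}$ together with the remark that for scales $r\ge r_0$ the covering number is bounded while $(r^{\theta-1})^\alpha\ge 1$. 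Your bookkeeping of the constant and the limiting step $\epsilon\to 0$ are both correct. One small observation: your second bound only uses the upper estimate in \eqref{eq: N_r rel by dimB}, so it in fact gives $\dim_A^\theta E\le \ovdimB E/(1-\theta)$ without assuming the box dimension exists, which is the more general form stated in Fraser's book (and is consistent with \eqref{eq: spec bounds}); the paper specializes to sets where $\dim_B E$ exists because that is all it needs.
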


\section{Dimensions and impossible concentric spheres} \label{sec: dimensions}

In this section, we first establish certain dimension-theoretic properties for polynomially concentric spheres. Namely, we estimate the  box dimension and Assouad spectrum of such collections. We then modify the arguments we employed while proving the aforementioned estimates, in order to show the restrictions of polynomially concentric sphere collections with regard to the generating sphere, which yields Theorem \ref{thm: no p-pol concentric sphere coll}.

Let $p>0$ and $S_0=f(S(0,1))\subset \R^d$ be a topological sphere, for some homeomorphism $f:\R^d\rightarrow \R^d$. Note that the topological dimension of $S(0,1)$ is equal to $d-1$, and, since the topological dimension is preserved under homeomorphisms, so is the topological dimension of $S_0$. Thus, we immediately have $\lodimB S_0\geq d-1$ (see, for instance, \cite{Heinonen} p.~62). Suppose that
$S_p=\cup_{n=0}^\infty S_n$ is a $p$-concentric sphere collection centered at some $x_0\in \R^d$ generated by $S_0$, as in Definition \ref{def: concentric}.

We can perform certain reductions on $S_p$. Without loss of generality, we can assume that $x_0=0$, since all the dimension notions we consider are translation-invariant. In addition, by finite stability of the box dimension and the Assouad spectrum (see \cite{Fra_book}, p.~18, 49), we can further assume that for the given $S_p$, we have $n_0=1$ in Definition \ref{def: concentric}. The above reductions imply that there is $c_1\in (0,1]$ such that
\begin{equation}\label{eq: d_H reduction}
    d_H(S_n, \{0\})=c_1 n^{-p},
\end{equation} for all $n\in\N$. In addition,  there is $c_2\in (0,1]$, independent of $n$, such that
\begin{equation}\label{eq: d(Sn,Sn+1) reduction}
    \dist(S_n, S_{n+1})=c_2(n^{-p}-(n+1)^{-p}),
\end{equation} for all $n\in \N$. Moreover, for any $n\in \N$, we have that
\begin{equation}\label{eq: dist(S_n,0)}
    \dist(S_n,\{0\})\simeq n^{-p},
\end{equation} where the comparability constant is independent of $n$. Indeed, the upper bound for \eqref{eq: dist(S_n,0)} follows trivially from \eqref{eq: d_H reduction}. In order to prove that $\dist(S_n,\{0\})\gtrsim n^{-p}$, let $z_n\in S_n$ be a point such that $|z_n|=\dist(S_n,\{0\})$. Since $S_j$ for $j\in\{n+1, n+2, \dots\}$ are all topological spheres with corresponding topological balls containing $0$, the line segment with end-points at $z_n$ and $0$ intersects each such $S_j$, say at $z_j\in S_j$. This implies that 
$$
|z_j-z_{j+1}|\geq \dist(S_j, S_{j+1})\simeq j^{-p}-(j+1)^{-p},
$$ for all $j\in\{n, n+1, \dots\}$. However, the sum of all of these distances $|z_j-z_{j+1}|$ has length less than the line segment where all these points lie on, namely
$$
\dist(S_n,\{0\})\gtrsim \sum_{j=n}^{\infty}(j^{-p}-(j+1)^{-p})=\sum_{j=1}^{\infty}(j^{-p}-(j+1)^{-p})-\sum_{j=1}^{n-1}(j^{-p}-(j+1)^{-p}).
$$ Using the fact that for any $m\in \N$ we have $\sum_{j=1}^{m}(j^{-p}-(j+1)^{-p})=1-(1+m)^{-p}$, the proof of \eqref{eq: dist(S_n,0)} is complete. Note that \eqref{eq: dist(S_n,0)} also implies that the similarity ratio of $g_n$ is in fact comparable to $n^{-p}$, for all $n\in \N$. This allows us to use \eqref{eq: N_r similar} in our arguments.

{ The above reductions are enough to start with the estimates on the box dimension of $S_p$ in the case where $d_0:=\lodimB S_0$ satisfies $d_0=d-1=\ovdimB S_0$.} While we could replace $d_0$ in the following proof by the explicit value $d-1$, we prefer to use the former notation to showcase how similar arguments are used for the proof of Theorem \ref{thm: no p-pol concentric sphere coll} for { $d_0=\lodimB S_0>d-1$}.

\begin{thm} \label{thm: dimB Sp}
For a $p$-concentric sphere collection $S_p$ as outlined above, with $d_0=\dim_B S_0=d-1$, we have
$$
\dim_B S_p =  \max\left\{ \frac{d}{1+p},\, d-1 \right\}.
$$
\end{thm}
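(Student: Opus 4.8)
The plan is to estimate $N_r(S_p)$ directly from the covering numbers of the individual spheres $S_n$ and to balance two competing sources of covering cost: the "bulk" spheres $S_n$ with $n$ small (those that are not yet small compared to $r$), and the "tail" union $\bigcup_{n\ge m} S_n$, which at scale $r$ behaves essentially like a thickened ball of radius comparable to $m^{-p}$. First I would fix $r\in(0,1)$ small and choose a threshold index $m=m(r)$ to be optimized later, splitting $S_p=\bigl(\bigcup_{n<m}S_n\bigr)\cup\bigl(\bigcup_{n\ge m}S_n\bigr)$. For the tail, the reduction \eqref{eq: dist(S_n,0)} shows $\bigcup_{n\ge m}S_n\subset B(0,C m^{-p})$, so it is covered by $\lesssim (m^{-p}/r)^d$ cubes of side $r$ once $r\lesssim m^{-p}$ (and by $O(1)$ cubes otherwise). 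For the bulk, each $S_n$ with $n<m$ is a similar copy of $S_0$ with ratio $\simeq n^{-p}$, so by \eqref{eq: N_r similar} and $d_0=d-1$ we get $N_r(S_n)\simeq (n^{-p}/r)^{d-1}$ for those $n$ with $r\lesssim n^{-p}$, i.e. for $n\lesssim r^{-1/p}$; for larger $n$ the sphere $S_n$ fits inside $O(1)$ cubes, but those are already accounted for in the tail estimate. Thus, up to constants,
$$
N_r(S_p)\ \simeq\ \sum_{n=1}^{\min\{m,\,r^{-1/p}\}} \left(\frac{n^{-p}}{r}\right)^{d-1}\ +\ \left(\frac{m^{-p}}{r}\right)^{d}.
$$

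Next I would choose $m$ optimally. The sum over $n$ of $(n^{-p}/r)^{d-1}=r^{-(d-1)}\sum n^{-p(d-1)}$ has qualitatively different behavior depending on whether $p(d-1)$ is less than, equal to, or greater than $1$; since $d_0=d-1$ here and $d\ge 2$, one should just carry the exponent $p(d-1)$ through. Taking $m\simeq r^{-1/p}$ (so that the tail ball has radius $\simeq r$ and contributes $O(1)$) makes the bulk sum run all the way to $r^{-1/p}$, giving $N_r(S_p)\simeq r^{-(d-1)}\sum_{n\le r^{-1/p}} n^{-p(d-1)}$. When $p(d-1)<1$ this sum is $\simeq (r^{-1/p})^{1-p(d-1)}=r^{-1/p+(d-1)}$, so $N_r(S_p)\simeq r^{-(d-1)-1/p+(d-1)}=r^{-1/p}$ — wait, that needs care; the cleaner route is to compare the two exponents $\frac{d}{1+p}$ and $d-1$ that appear in the statement and show the estimate matches $r^{-\max\{d/(1+p),\,d-1\}}$ by choosing $m$ to equalize the two terms. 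Concretely, setting the bulk term $\simeq r^{-(d-1)}\cdot(\text{sum up to }m)$ equal to the tail term $(m^{-p}/r)^d$ and solving for $m$ yields $m\simeq r^{-1/(1+p)}$ in the regime where the geometric-type tail dominates, producing the exponent $d/(1+p)$; and when $d-1\ge d/(1+p)$ (equivalently $p\ge 1/(d-1)$, i.e. the sum $\sum n^{-p(d-1)}$ converges) the bulk term alone already gives $r^{-(d-1)}$ and dominates. I would present this as two cases according to $\max\{d/(1+p),d-1\}$, verify the matching lower bound in each (the lower bound is easy: either use a single sphere $S_n$ with $n\simeq r^{-1/p}$ — giving $\gtrsim (n^{-p}/r)^{d-1}$ is too weak, so instead use the whole tail, which contains the sphere $S_m$ of radius $\simeq m^{-p}$ sitting around $0$, and a ball of radius $\simeq m^{-p}$ needs $\gtrsim (m^{-p}/r)^{d-?}$...), and then read off $\dim_B S_p$ from \eqref{eq: N_r rel by dimB}.

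The main obstacle I anticipate is the lower bound, and in particular handling the borderline $p(d-1)=1$ case cleanly without the logarithmic factor spoiling the box dimension (a $\log$ factor is harmless since $\log N_r/(-\log r)$ still converges). For the lower bound the safe strategy is: fix $r$, take $m\simeq r^{-1/(1+p)}$, note that the sphere $S_m$ is a $(d-1)$-topological sphere of diameter $\simeq m^{-p}\simeq r^{p/(1+p)}$ enclosing a topological ball around $0$, hence by a volume/projection argument any $r$-cover of $S_m$ has $\gtrsim (m^{-p}/r)^{d-1}=r^{-(d-1)p/(1+p)}$... this must be combined with summing contributions of many well-separated spheres $S_n$, $n\le m$, whose mutual distances $\simeq n^{-p}-(n+1)^{-p}$ (from \eqref{eq: d(Sn,Sn+1) reduction}) exceed $r$ precisely when $n\lesssim r^{-1/(1+p)}\simeq m$, so the spheres $S_1,\dots,S_m$ are "$r$-separated enough" that their $r$-covers are essentially disjoint, giving $N_r(S_p)\gtrsim\sum_{n\le m}(n^{-p}/r)^{d-1}$, which is exactly the bulk sum and hence $\gtrsim r^{-\max\{d/(1+p),d-1\}}$ after the same computation as the upper bound. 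I would structure the final write-up as: (1) reductions (already done in the excerpt), (2) upper bound via the split and the choice of $m$, (3) lower bound via $r$-separation of the $S_n$, (4) conclude via \eqref{eq: N_r rel by dimB}.
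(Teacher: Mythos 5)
Your strategy is essentially the paper's: split $S_p$ at a threshold index $n_r \simeq r^{-1/(p+1)}$ (which the paper selects via the gap condition $n_r^{-p}-(n_r+1)^{-p}\simeq r$), count the head $\bigcup_{n\le n_r}S_n$ by summing $N_r(S_n)\simeq (n^{-p}/r)^{d_0\pm\epsilon}$ over $n\le n_r$, and bound the tail by the covering number of a ball of radius $\simeq n_r^{-p}$. Two remarks on your execution. First, your initial trial threshold $m\simeq r^{-1/p}$ is exactly the trap the paper warns against in the remark after the theorem: with that choice the consecutive gaps $n^{-p}-(n+1)^{-p}\simeq n^{-p-1}$ drop below $r$ long before $n$ reaches $m$, so the identity $N_r\bigl(\bigcup_{n\le m}S_n\bigr)=\sum_{n\le m}N_r(S_n)$ fails and the resulting exponent $r^{-1/p}$ is wrong (as you noticed). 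The gap condition, not term-balancing, is what forces $n_r\simeq r^{-1/(p+1)}$ and makes both the disjointness of the head covers and the ``tail fills a ball'' estimate legitimate. Second, you write $N_r(S_n)\simeq (n^{-p}/r)^{d-1}$ directly, but $\dim_B S_0=d-1$ only gives $N_\rho(S_0)\in[\rho^{-d_1},\rho^{-d_2}]$ for arbitrary $d_1<d-1<d_2$; the paper carries these auxiliary exponents through and lets $d_1,d_2\to d-1$ at the end, which is what cleanly handles the borderline $p(d-1)=1$ and avoids appealing to ``logs are harmless'' heuristics. With those two points made precise, your write-up plan (1)--(4) would reproduce the paper's proof.
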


\begin{proof}
Fix positive $d_1, d_2<d$ close to $d_0$ such that $d_1<d_0<d_2$. Due to $\dim_B S_0=d_0$ and \eqref{eq: N_r rel by dimB}, there is $r_0=r_0(d_1,d_2)>0$ such that
\begin{equation}\label{eq: dimB N_r with d1 d2}
    r^{-d_1}\leq N_r(S_0)\leq r^{-d_2},
\end{equation} for all $r\in (0,r_0)$. Moreover, since $f$ is a homeomorphism, the topological dimension of $B_0=f(B(0,1))$ is equal to $d$. It follows that $\dim_B B_0=d$. Hence, for any tiny $\epsilon>0$, there is $r_0'=r_0'(\epsilon)>0$ such that
\begin{equation}\label{eq: dimB of B_0}
    r^{-d+\epsilon}\leq N_r(B_0)\leq r^{-d-\epsilon},
\end{equation} for all $r\in (0,r_0')$.

Fix an arbitrary tiny $\epsilon>0$, and an arbitrary $r\in (0,\min\{r_0,r_0'\})$, so that \eqref{eq: dimB N_r with d1 d2}, \eqref{eq: dimB of B_0} hold, and let $n_r$  be the unique positive integer satisfying
\begin{equation}\label{eq: dimB choice of n_r}
n_r^{-p}-(n_r+1)^{-p} \leq \frac{r}{c_2d^{1/2}} < (n_r-1)^{-p}-n_r^{-p}.
\end{equation} In arguments that follow, we make a few more assumptions on how small $r$ is selected to be, which only involve uniform constants, such as $p$, $c_1$, $c_2$, $d$, and the uniform comparability constant for the similarity ratios of $g_j$ from \eqref{eq: dist(S_n,0)}. This ensures there is no loss of generality in these further assumptions on the size of $r$, while it eases the notation and reduces the number of technical inequalities.

By the choice of $n_r$ in \eqref{eq: dimB choice of n_r}, we can split $S_p$ into two disjoint sets, i.e.,  $S_p= S_p^+\cup S_p^-$, where
$$
S_p^+:=\bigcup_{n>n_r} S_n
$$  
$$
S_p^-:= \bigcup_{n \leq n_r} S_n,
$$
and consider $N_r(S_p^+)$ and $N_r(S_p^-)$ separately. The reason is that $r$-cubes that intersect $S_p^+$ are essentially disjoint with those that intersect $S_p^-$ due to \eqref{eq: d(Sn,Sn+1) reduction}  and \eqref{eq: dimB choice of n_r}, hence resulting in { $N_r(S_p)\simeq N_r(S_p^+)+N_r(S_p^-)$}. Moreover, by \eqref{eq: d(Sn,Sn+1) reduction} and the right-hand side of \eqref{eq: dimB choice of n_r}, the collection of $r$-cubes intersecting some $S_n\subset S_p^-$ for $n\leq n_r$ and the collection of $r$-cubes intersecting some $S_m$ for $n\neq m\leq n_r$ are essentially disjoint collections, implying that $N_r(S_p^-)=\sum_{n=1}^{n_r} N_r(S_n)$. Furthermore, by the left-hand side of \eqref{eq: dimB choice of n_r} and by \eqref{eq: d_H reduction}, \eqref{eq: dist(S_n,0)}, the scale $r$ is large enough for each cube in the $r$-mesh intersecting $B(0,c_1 n_r^{-p})$ to overlap with more than one of the spheres in $S^+$, resulting in $N_r(S^+)\simeq N_r(g_{n_r}(B_0))$, where the comparability constant is independent of $r$. Therefore, we have
\begin{equation}\label{eq: dimB N_r(Sp) split}
N_r\left( S_p \right)  \simeq  N_r(g_{n_r}(B_0)) + \sum_{j=1}^{n_r}  N_r\left( S_j \right)
\end{equation}
Note that $n_r$ increases as $r$ decreases. Hence, for small enough $r$ (in a way that depends only on $p, c_1, c_2, d$), the relation \eqref{eq: dimB choice of n_r} actually implies 
\begin{equation}\label{eq: dimB nr and r compar}
    n_r\simeq r^{-\frac{1}{p+1}},
\end{equation} where the comparability constant depends only on $p, c_1, c_2, d$. The similarity ratio of $g_{n_r}$ is comparable to $n_r^{-p}$. Since $n_r^p r\simeq n_r^{-1}$ by \eqref{eq: dimB nr and r compar}, without loss of generality, we can assume that $r$ is small enough, in a way solely dependent on $p$ and the comparability constant in the former relation, so that $n_r^p r$ is smaller than $\min\{r_0,r_0'\}$. This ensures that \eqref{eq: N_r similar} can be applied to relate $N_r(B_0)$ and $N_r(g_{n_r}(B_0))$ for the fixed $r$. By \eqref{eq: dimB of B_0}, \eqref{eq: dimB nr and r compar}, this results in
\begin{equation}\label{eq: dimB Nr(g_nr(B_0))}
    r^{-\frac{d-\epsilon}{1+p}}\simeq\left(\frac{1}{n_r^{p} r} \right)^{d-\epsilon} \lesssim N_r(g_{n_r}(B_0))\lesssim \left(\frac{1}{n_r^{p} r} \right)^{d+\epsilon} \simeq r^{-\frac{d+\epsilon}{1+p}},
\end{equation} for the fixed tiny $\epsilon>0$.

Since for any $j\in \{1,\dots, n_r\}$ the similarity ratio of $g_j$ is comparable to $j^{-p}$, and noting that $j^p r\leq n_r^p r<\min\{r_0,r_0'\}$ and $S_j=g_j(S_0)$, we similarly have by \eqref{eq: N_r similar}  that
\begin{equation}\label{eq: dimB Nr(S_j)}
    \left(\frac{r}{j^{-p}}\right)^{-d_1}\lesssim N_r(g_j(S_0))\lesssim \left(\frac{r}{j^{-p}}\right)^{-d_2}.
\end{equation}

Combining \eqref{eq: dimB N_r(Sp) split}, \eqref{eq: dimB Nr(g_nr(B_0))} and \eqref{eq: dimB Nr(S_j)} results in
\begin{equation}\label{eq: dimB double ineq Nr(S_p)}
    r^{-\frac{d-\epsilon}{1+p}}+\sum_{j=1}^{n_r}\frac{j^{-p d_1}}{r^{d_1}} \lesssim N_r(S_p) \lesssim r^{-\frac{d+\epsilon}{1+p}}+\sum_{j=1}^{n_r}\frac{j^{-p d_2}}{r^{d_2}}.
\end{equation}
Suppose that $pd_1<1$. Then, by standard partial sum estimates and \eqref{eq: dimB nr and r compar} we have
\begin{equation}\label{eq: dimB pd_1<1}
    \sum_{j=1}^{n_r}\frac{j^{-p d_1}}{r^{d_1}}\simeq n_r^{1-pd_1} r^{-d_1}\simeq r^{-\frac{1-pd_1}{p+1}-d_1}=r^{\frac{-1-d_1}{p+1}}.
\end{equation} Similarly, if $pd_1>1$, we have
\begin{equation}\label{eq: dimB pd_1>1}
    \sum_{j=1}^{n_r}\frac{j^{-p d_1}}{r^{d_1}}\simeq \frac{1}{pd_1-1} r^{-d_1}\simeq r^{-d_1}.
\end{equation}

A similar case study on the product $pd_2$ shows that for $pd_2<1$ we have
\begin{equation}\label{eq: dimB pd_2<1}
    \sum_{j=1}^{n_r}\frac{j^{-p d_2}}{r^{d_2}}\simeq n_r^{1-pd_2} r^{-d_2}\simeq r^{-\frac{1-pd_2}{p+1}-d_2}=r^{\frac{-1-d_2}{p+1}},
\end{equation} and for $pd_2>1$ we have
\begin{equation}\label{eq: dimB pd_2>1}
    \sum_{j=1}^{n_r}\frac{j^{-p d_2}}{r^{d_2}}\simeq \frac{1}{pd_2-1} r^{-d_2}\simeq r^{-d_2}.
\end{equation}

We now determine $\dim_B S_p$ based on the value of the product $pd_0$. Suppose $pd_0<1$, then $d_2$ can be chosen close enough to $d_0$ to ensure $pd_1<pd_2<1$. Thus, by \eqref{eq: dimB double ineq Nr(S_p)}, \eqref{eq: dimB pd_1<1}, \eqref{eq: dimB pd_2<1} we have
$$
r^{-\frac{d-\epsilon}{1+p}}+ r^{\frac{-1-d_1}{p+1}}\lesssim N_r(S_p)\lesssim r^{-\frac{d+\epsilon}{1+p}}+ r^{\frac{-1-d_2}{p+1}}.
$$ Note that $d_2> d-1$,  so $\epsilon$ can be chosen small enough so that $d_2+1> d+\epsilon$. Since $r$ was arbitrary, the above relation implies that
$$
    \frac{1+d_1}{p+1}\leq \dim_B S_p \leq \frac{1+d_2}{p+1}.
$$ Recall that $d_1$, $d_2$ are arbitrary and as close to $d_0$ as necessary, which combined with the above inequality is enough to show that in the case $pd_0<1$, we have
$$
\dim_B S_p =\frac{1+d_0}{p+1}=\frac{d}{p+1}.
$$

On the other hand, if $pd_0>1$, we similarly make sure $pd_2>pd_1>1$ by choice of $d_1, d_2$. Note that by $d_2>d-1$, the value $\epsilon$ can be chosen small enough so that $pd_2>1$ implies that $d+\epsilon<pd_2+d_2$, which gives
$$
\frac{d+\epsilon}{p+1}<d_2.
$$
Using a similar argument to that of the previous case, and the above inequality, along with \eqref{eq: dimB double ineq Nr(S_p)}, \eqref{eq: dimB pd_1>1}, \eqref{eq: dimB pd_2>1}, we can show that
$$
d_1\leq \dim_B S_p \leq d_2,
$$ which is enough to conclude that $\dim_B S_p=d_0$ as needed.

The case $p d_0=1$ is treated similarly, using \eqref{eq: dimB double ineq Nr(S_p)}, \eqref{eq: dimB pd_1<1}, \eqref{eq: dimB pd_2>1}, resulting in $\dim_B S_p=d_0=d-1$, due to the fact that
$$
\frac{1+d_0}{1+p}= \frac{1+d_0}{1+1/d_0}=d_0.
$$ All cases have been addressed and the proof is complete.

\end{proof}

\begin{rem}
	It might be tempting to consider $r\simeq n_r^{-p}$, so that all but finitely many of the spheres $S_j\subset S_p$ are covered by the $2^d$ $r$-cubes with vertices at $0$. However, this choice of $n_r$ would hinder any following covering arguments, and especially the crucial equation $N_r(S_p^-)=\sum_{j=1}^{n_r} N_r(S_j)$ would no longer be true.
\end{rem}

The techniques employed in the proof of Theorem \ref{thm: dimB Sp} are in fact very similar to those needed to determine the Assouad spectrum of $S_p$. By adjusting the right quantitative estimates, we can modify accordingly the arguments within the previous proof, so that they apply in the context of two small scales, $r$ and $r^\theta$, as in the definition of the $\theta$-Assouad spectrum.

\begin{thm}\label{thm: A-spec of S_p}
    Let $S_0\subset \R^d$ be a topological sphere and $S_p=\bigcup_{n\in \N_0} S_n$ be a $p$-concentric sphere collection generated by $S_0$ as in Theorem \ref{thm: dimB Sp}, with the additional assumption that $\dim_A S_0=d-1$\footnote{Requiring $\dim_B S_0=\dim_A S_0$ might seem restrictive at first. However, various of the lines of research outlined in the Introduction focus on  \textit{$d_0$-Ahlfors regular} spheres (see \cite{Bonk_QC, Bonk_Meyer} for definition and details), which is a special case of the spheres with $\dim_B S_0=\dim_A S_0$. { Hence, it is natural to consider $\dim_B S_0=\dim_A S_0=d-1$ in this case.}}. 
    If $p (d-1)\leq 1$, then  we have
    \begin{equation}\label{eq: main dim_A pd<1}
    \dim_A^\theta S_p = \min \left\{ \frac{d}{(1+p)(1-\theta)}, \,d \right\},
    \end{equation} for all $\theta\in (0,1)$. On the other hand, if $p (d-1)>1$, then
    \begin{equation}\label{eq: main dim_A pd>1}
        \dim_A^\theta S_p = \min \left\{ d-1+ \frac{\theta}{p(1-\theta)},\, d\right\},
    \end{equation} for all $\theta\in (0,1)$.
\end{thm}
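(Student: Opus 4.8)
The plan is to establish matching lower and upper bounds for $N_r\big(B(x,R)\cap S_p\big)$ with $R=r^\theta$, $0<r<1$, by adapting the covering analysis of Theorem \ref{thm: dimB Sp} to the two-scale regime of the Assouad spectrum. Observe first that $\dim_A S_0=d-1$ forces $\dim_B S_0=d-1$, so Theorem \ref{thm: dimB Sp} and the reductions preceding it are available. Throughout I would fix auxiliary exponents $d_1<d_0=d-1<d_2$ with $d_1,d_2<d$, use \eqref{eq: N_r similar} together with $\dim_A S_0=d-1$ to control covering numbers of the scaled spheres $S_n=g_n(S_0)$ up to $d_1,d_2$ (and a small parameter for $B_0$), and let $d_1,d_2\to d_0$ at the end. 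The two relevant scale thresholds are the upper cutoff $n_r\simeq r^{-1/(1+p)}$ from the proof of Theorem \ref{thm: dimB Sp} (for $n>n_r$ consecutive spheres lie within $r$ and ``merge'' at scale $r$, while for $n\le n_r$ they are $r$-separated by \eqref{eq: d(Sn,Sn+1) reduction}) and a new lower cutoff $m_R\simeq R^{-1/p}$, which up to constants is the least index $n$ with $S_n\subset B(0,R)$, using \eqref{eq: d_H reduction}.

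For the lower bound I would center a ball near the accumulation point: since $d_H(S_n,\{0\})=c_1n^{-p}\to 0$, one can pick $x\in S_p$ with $|x|\le R/2$, whence $B(x,R)\supset B(0,R/2)\supset\bigcup_{n\ge m_R}S_n$. If $\theta\ge p/(1+p)$ then $n_r\le m_R$, every sphere inside $B(0,R/2)$ lies in the ``merged'' range, and the localized version of the argument of Theorem \ref{thm: dimB Sp} shows that essentially every $r$-cube meeting $B(0,cR)$ also meets $S_p$, so $N_r\big(B(x,R)\cap S_p\big)\gtrsim(R/r)^d$; since $\dim_A^\theta S_p\le d$ always, this gives the branch $\min\{\cdot,d\}=d$. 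If $\theta<p/(1+p)$ then $m_R<n_r$, the spheres $S_{m_R},\dots,S_{n_r}$ are pairwise $r$-separated, and \eqref{eq: N_r similar} yields
$$
N_r\big(B(x,R)\cap S_p\big)\gtrsim\sum_{j=m_R}^{n_r}N_r(S_j)\simeq r^{-d_0}\sum_{j=m_R}^{n_r}j^{-pd_0}.
$$
When $p(d-1)<1$ this sum is comparable to $n_r^{1-pd_0}$, and substituting $n_r\simeq r^{-1/(1+p)}$ gives the order $r^{-d/(1+p)}$ (the densely packed inner spheres $\bigcup_{n>n_r}S_n\subset B(x,R)$ contribute the same order, exactly as in the proof of Theorem \ref{thm: dimB Sp}); when $p(d-1)>1$ the sum is comparable to its bottom term $m_R^{1-pd_0}\simeq R^{(pd_0-1)/p}$, so $N_r\big(B(x,R)\cap S_p\big)\gtrsim r^{-d_0}R^{(pd_0-1)/p}$. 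Reading each exponent against $(R/r)^\alpha=r^{-\alpha(1-\theta)}$ and letting $d_1,d_2\to d_0$ produces the claimed lower bounds.

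For the upper bound, the case $p(d-1)\le 1$ needs nothing new: Theorem \ref{thm: dimB Sp} gives $\dim_B S_p=d/(1+p)$, so Lemma \ref{le: dimA upper bd} yields $\dim_A^\theta S_p\le\min\{d/((1+p)(1-\theta)),d\}$, matching the lower bound. The case $p(d-1)>1$ requires a direct argument valid for every $x\in S_p$: I would write
$$
B(x,R)\cap S_p\subset\Big(B(x,R)\cap\bigcup_{n>n_r}S_n\Big)\cup\bigcup_{n\le n_r,\ S_n\cap B(x,R)\ne\emptyset}\big(S_n\cap B(x,R)\big)
$$
and bound the pieces separately. The first piece lies in $B(x,R)\cap\overline{B(0,c_1n_r^{-p})}$, a set of radius $\min\{R,c_1n_r^{-p}\}$; since $c_1n_r^{-p}\simeq r^{p/(1+p)}$, its $r$-covering number is $\lesssim r^{-d/(1+p)}$ when $\theta<p/(1+p)$ and $\lesssim(R/r)^d$ otherwise, and a short computation shows $r^{-d/(1+p)}\le(R/r)^{d_0+\theta/(p(1-\theta))}$ on the relevant range. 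For each surviving $n\le n_r$ one has $r\lesssim n^{-p}$, so $\dim_A S_0=d_0$ and \eqref{eq: N_r similar} give $N_r\big(S_n\cap B(x,R)\big)\lesssim\big(\min\{R,n^{-p}\}/r\big)^{d_0}$ up to the exponent $d_2$; splitting at $n\simeq m_R\simeq R^{-1/p}$, the ``large'' spheres $n\lesssim m_R$ are at most $\simeq R^{-1/p}$ in number and each contributes $\lesssim(R/r)^{d_0}$, while the ``intermediate'' ones $m_R\lesssim n\le n_r$ contribute $\lesssim r^{-d_0}\sum_{n>m_R}n^{-pd_0}\simeq r^{-d_0}R^{(pd_0-1)/p}$. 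Both totals are comparable to $(R/r)^{d_0+\theta/(p(1-\theta))}$, and where this exponent exceeds $d$ one falls back on the trivial bound $(R/r)^d$; letting $d_2\to d_0$ finishes the proof.

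I expect the main obstacle to be this last upper bound when $p(d-1)>1$: unlike the lower bound, it must hold for an arbitrary center $x$, so one must simultaneously control the inner accumulation region (which behaves like a solid $d$-dimensional ball), the boundedly many but individually large spheres that $B(x,R)$ can cross, and the convergent tail from the smaller spheres, and then check that all three contributions coincide in order with $(R/r)^{d-1+\theta/(p(1-\theta))}$. The bookkeeping of the auxiliary exponents $d_1<d_0<d_2$---forced because the box and Assouad dimensions of $S_0$ only bound covering numbers up to an arbitrarily small error---adds technical but not conceptual difficulty.
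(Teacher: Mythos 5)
Your proposal is correct and tracks the paper's proof quite closely: same cutoff indices $n_r\simeq r^{-1/(p+1)}$ and $m_r\simeq R^{-1/p}$, same use of auxiliary exponents $d_1<d_0<d_2$, same reliance on $\dim_A S_0 = d_0$ to control per-sphere covers of balls, same trivial upper bound via Theorem~\ref{thm: dimB Sp} and Lemma~\ref{le: dimA upper bd} when $p(d-1)\le 1$, and the same lower-bound ball near the accumulation point. The one place you depart is the organisation of the upper bound when $p(d-1)>1$. The paper conditions on the location of the center $x$: if $|x|\le 2r^\theta$ the cover is dominated by $N_r(B(0,10r^\theta)\cap S_p)$ via doubling, and if $|x|>2r^\theta$ then $B(x,r^\theta)$ is disjoint from $\overline{B(0,r^\theta)}$, so only the $\lesssim m_r$ outermost spheres can meet the ball and one multiplies by the per-sphere bound $(r^{\theta-1})^{d_2}$. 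You instead decompose $S_p$ itself into the inner merged core $\bigcup_{n>n_r}S_n$, the $\lesssim m_R$ ``large'' spheres $n\le m_R$, and the intermediate tail $m_R<n\le n_r$, and bound each part's contribution uniformly in $x$. The two schemes are equivalent once one notices that for $|x|>2r^\theta$ your intermediate and inner contributions vanish (those spheres sit inside $\overline{B(0,r^\theta)}$, disjoint from $B(x,r^\theta)$), so your decomposition subsumes the paper's two cases in one pass. Your handling of $\theta\ge p/(p+1)$ by a direct lower bound and the trivial $(R/r)^d$ ceiling is also a small variation on the paper, which instead invokes continuity of $\theta\mapsto\dim_A^\theta S_p$. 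These are presentational rather than substantive differences; the estimates you obtain, and the role played by the Assouad dimension of $S_0$ (to get the localised per-sphere bound $N_r(B(x,R)\cap S_n)\lesssim (\min\{R,n^{-p}\}/r)^{d_2}$ even when $S_n$ is large compared with $R$), match the paper's exactly. One small point worth making explicit in a write-up: the verification that the inner core contributes at most $(R/r)^{d_0+\theta/(p(1-\theta))}$ for $\theta<p/(p+1)$, which you refer to as a ``short computation,'' amounts to checking that $(1-\theta)(d-1)+\theta/p\ge d/(p+1)$ on $(0,p/(p+1))$, which follows from affineness of the left side and the endpoint values; the paper carries out the equivalent check when identifying the dominant term in its display \eqref{eq: dimA Nr rel for pd0>1}.
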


\begin{proof}
Recall that $d_0=\dim_B S_0=d-1=\dim_A S_0$. For any $\theta\geq p/(p+1)$, both \eqref{eq: main dim_A pd<1} and \eqref{eq: main dim_A pd>1} result in $\dim_A^\theta S_p=d$. Hence, by continuity of the Assouad spectrum as a function of $\theta$, it is enough to show \eqref{eq: main dim_A pd<1} and \eqref{eq: main dim_A pd>1} for all $\theta<p/(p+1)$.

Fix $\theta\in (0,p/(p+1))$ and positive $d_1, d_2$ close to $d_0$, such that $d_1<d_0<d_2$. Moreover, fix $r\in (0,1)$, $\epsilon>0$ as small as necessary for the following arguments, in a way that depends only on  uniform constants of $S_p$ and $\R^d$, similarly to the proof of Theorem \ref{thm: dimB Sp}. Denote by $n_r\in \N$ the unique positive integer satisfying
\begin{equation}\label{eq: dimA choice of n_r}
n_r^{-p}-(n_r+1)^{-p} \leq \frac{r}{c_2d^{1/2}} < (n_r-1)^{-p}-n_r^{-p},
\end{equation} and by $m_r\in \N$ the unique positive integer satisfying
\begin{equation}\label{eq: dimA choice of m_r theta}
m_r^{-p} \leq \frac{r^\theta}{c_1} < (m_r-1)^{-p}.
\end{equation} Since $r$ was fixed to be small enough, the integers $n_r$, $m_r$ are large enough, so that by \eqref{eq: dimA choice of n_r} we have
\begin{equation}\label{eq: dimA nr and r compar}
    n_r\simeq r^{-\frac{1}{p+1}},
\end{equation} and by \eqref{eq: dimA choice of m_r theta} we have
\begin{equation}\label{eq: dimA mr and r-theta compar}
    m_r\simeq r^{-\frac{\theta}{p}}.
\end{equation} Due to the choice $\theta<p/(p+1)$, by \eqref{eq: dimA choice of n_r} and \eqref{eq: dimA choice of m_r theta} we also have that $m_r\leq n_r$. Note that the Assouad spectrum is stable under taking the closure of a set (see \cite[p.18, 49]{Fra_book}), which ensures that we may estimate the Assouad spectrum of the closure of $S_p$ instead. Therefore, henceforth we follow the convention that the center $0$ of $S_p$ is also part of the concentric sphere collection. Moreover, despite the fact that the definition of the Assouad spectrum requires to check balls centered at all points of the given set, we focus on $B(0,r^\theta)$ instead, for reasons that become clear in later.

Arguing similarly to the proof of Theorem \ref{thm: dimB Sp} and splitting $B(0,r^\theta)\cap S_p$ into two disjoint sets, we have
$$
N_r(B(0,r^\theta)\cap S_p)\simeq N_r(g_{n_r}(B_0))+\sum_{j=m_r}^{n_r} N_r(S_j),
$$ which by \eqref{eq: N_r similar} and the choices of $\epsilon, d_1, d_2$ implies that
$$
\sum_{j=m_r}^{n_r} \frac{j^{-pd_1}}{r^{d_1}}\lesssim N_r(B(0,r^\theta)\cap S_p)\lesssim \left(\frac{n_r^{-p}}{r}\right)^{d+\epsilon}+\sum_{j=m_r}^{n_r} \frac{j^{-pd_2}}{r^{d_2}}.
$$ Using \eqref{eq: dimA nr and r compar} and standard partial summation techniques on the above relation results in \small
\begin{equation}\label{eq: dimA Nr double rel}
    r^{-d_1}\max\{ m_r^{1-pd_1}, n_r^{1-pd_1} \}\lesssim N_r(B(0,r^\theta)\cap S_p)\lesssim r^{\left(\frac{p}{p+1}-1\right)(d+\epsilon)}+r^{-d_2}\max\{ m_r^{1-pd_2}, n_r^{1-pd_2}\}.
\end{equation} \normalsize The rest of the proof is a case study on the product $pd_0$, similarly to the proof of Theorem \ref{thm: dimB Sp}, while also accounting for the different values of $\theta$ that could potentially determine which power of $r$ is dominant.

Suppose $p d_0<1$. In this case, we only need to find a sequence of balls centered at $S_p$ and bound the corresponding covering number from below, which yields a lower bound on $\dim_A^\theta S_p$. This is because the desired upper bound on $\dim_A^\theta S_p$ follows trivially by Theorem \ref{thm: dimB Sp} and Lemma \ref{le: dimA upper bd}. Hence, we can focus on balls centered at $0$ and the lower bound of \eqref{eq: dimA Nr double rel}. Namely, by \eqref{eq: dimA Nr double rel} and $p d_1<1$ we have
$$
r^{-d_1}n_r^{1-pd_1}\lesssim N_r(B(0,r^\theta)\cap S_p),
$$ which by \eqref{eq: dimA nr and r compar} implies
$$
N_r(B(0,r^\theta)\cap S_p)\gtrsim r^{-d_1-\frac{1-pd_1}{p+1}}= (r^{\theta-1})^\frac{-d_1(p+1)-(1-pd_1)}{(\theta-1)(p+1)}=(r^{\theta-1})^\frac{d_1+1}{(1-\theta)(p+1)}.
$$ Thus, the above shows that
$$
\dim_A^\theta S_p\geq \frac{d_1+1}{(1-\theta)(p+1)}, 
$$ for arbitrary $d_1$ close to $d_0=d-1$, which is enough to complete the proof of \eqref{eq: main dim_A pd<1}.

Suppose that $p d_0>1$. Note at this point that if $x\in S_p$ with $|x|\leq 2 r^\theta$, then
\begin{equation}\label{eq: dimA Nx less N0}
    N_r(B(x,r^\theta)\cap S_p)\leq N_r(B(0,10r^\theta)\cap S_p)\lesssim N_r(B(0,r^\theta)\cap S_p),
\end{equation} where the comparability constant only depends on the ambient space $\R^d$ (see for instance \cite[Section 13.1]{Fra_book}). Hence, it remains to study the covering number $N_r(B(0,r^\theta)\cap S_p)$, and $N_r(B(x,r^\theta)\cap S_p)$ for arbitrary $x\in S_p$ with $|x|>2 r^\theta$. In the latter case, we observe that the individual spheres $S_j$ that intersect $B(x,r^\theta)$ are not more than $m_r$ in number, due to \eqref{eq: dist(S_n,0)} and \eqref{eq: dimA choice of m_r theta} (see also \cite[p.~3263]{Fra_spirals} for a similar argument). Therefore, using the fact that $\dim_A S_0=d_0=\dim_B S_0$, which implies that $\dim_A^\theta S_0= d_0$ by \eqref{eq: spec bounds}, and a similarity argument resembling \eqref{eq: N_r similar}, we have 
$$
N_r(B(x,r^\theta)\cap S_p)\lesssim m_r (r^{\theta-1})^{d_2}.
$$ By \eqref{eq: dimA mr and r-theta compar}, the above implies that
\begin{equation}\label{eq: dimA Nr(x)}
    N_r(B(x,r^\theta)\cap S_p)\lesssim r^{-\frac{\theta}{p} }(r^{\theta-1})^{d_2}= (r^{\theta-1})^{d_2+\frac{\theta}{p(1-\theta)}}.
\end{equation}
Hence, due to \eqref{eq: dimA Nx less N0} and the above inequality, it is enough to focus on achieving lower and upper bounds for $N_r(B(0,r^\theta)$ for the rest of the proof. The relation $p d_0>1$ implies for \eqref{eq: dimA Nr double rel} that
$$
r^{-d_1}m_r^{1-pd_1}\lesssim N_r(B(0,r^\theta)\cap S_p)\lesssim r^{\left(\frac{p}{p+1}-1\right)(d+\epsilon)}+r^{-d_2} m_r^{1-pd_2}.
$$ Using \eqref{eq: dimA mr and r-theta compar} in the above relation results in
$$
    r^{-d_1}r^{-\frac{\theta(1-pd_1)}{p}}\lesssim N_r(B(0,r^\theta)\cap S_p)\lesssim r^{\frac{-(d+\epsilon)}{p+1}}+r^{-d_2} r^{-\frac{\theta(1-pd_2)}{p}}.
$$ Note that
$$
-\frac{d_1}{\theta-1}-\frac{\theta(1-pd_1)}{p(\theta-1)}= d_1+\frac{\theta}{p(1-\theta)},
$$ and similarly for $d_2$ instead of $d_1$, which when applied to the previous inequality yield
\begin{equation}\label{eq: dimA Nr rel for pd0>1}
    (r^{\theta-1})^{d_1+\frac{\theta}{p(1-\theta)}}\lesssim N_r(B(0,r^\theta)\cap S_p)\lesssim (r^{\theta-1})^{\frac{d+\epsilon}{(p+1)(1-\theta)}}+(r^{\theta-1})^{d_2+\frac{\theta}{p(1-\theta)}}.
\end{equation} Regarding the dominant term in the upper bound of \eqref{eq: dimA Nr rel for pd0>1}, note that
$$
d_0+\frac{\theta}{p(1-\theta)}>\frac{d}{(p+1)(1-\theta)}
$$ if, and only if
$$
\theta< \frac{p(pd_0-d_0-d)}{(p+1)(pd_0-1)}=\frac{p}{p+1}.
$$ Hence, by choice of $\theta$, and by taking $\epsilon$ to be small enough, we have 
$$
d_2+\frac{\theta}{p(1-\theta)}>\frac{d+\epsilon}{(p+1)(1-\theta)},
$$ which if applied to \eqref{eq: dimA Nr rel for pd0>1} yields
$$
(r^{\theta-1})^{d_1+\frac{\theta}{p(1-\theta)}}\lesssim N_r(B(0,r^\theta)\cap S_p)\lesssim (r^{\theta-1})^{d_2+\frac{\theta}{p(1-\theta)}}.
$$ As a result, by the above and by \eqref{eq: dimA Nx less N0}, \eqref{eq: dimA Nr(x)}, we have shown that
$$
d_1+\frac{\theta}{p(1-\theta)}\leq \dim_A^\theta S_p\leq d_2+\frac{\theta}{p(1-\theta)},
$$ for $d_1$ and $d_2$ arbitrarily close to $d_0$, which is enough to complete the proof in this case as well.

The case $p d_0=1$ is treated similarly to the above cases, so we omit the details. Notice that in this case we have $pd_1<1$ and $1<pd_2$, which yield different dominant terms on the two sides of \eqref{eq: dimA Nr double rel}, but the arguments are identical for each side to the corresponding cases; namely, the lower bound is treated using $pd_1<1$ as in the case $pd_0<1$, and the upper bound is treated using $pd_2>1$ as in the case $pd_0>1$. This completes the proof.
\end{proof}

\begin{rem}
We emphasize that in the case $p(d-1)>1$, the Assouad spectrum is not given by the upper bound in Lemma \ref{le: dimA upper bd} for $0<\theta<\frac{p}{1+p}$. This provides us with a family of such examples in every Euclidean space $\R^d$.

\end{rem}

Note that for $p>(d-1)^{-1}$, the box dimension of $S_p$ is trivial, in the sense that $\dim_B S_p=d-1$. On the other hand, the Assouad spectrum provides non-trivial information on $S_p$, even in the case where the box dimension fails to do so. 
Moreover, \eqref{eq: main dim_A pd<1}, \eqref{eq: main dim_A pd>1}  yield that the Assouad dimension of $S_p$ is equal to $d$. { Hence, $\dim_A S_p$ also provides trivial information when compared to the Assouad spectrum.} The following is an immediate implication of \eqref{eq: spec bounds}. 
\begin{cor} 
For all $p>0$, we have  $\dim_A S_p = \qad S_p = d$, where $S_p$ is as in Theorem \ref{thm: A-spec of S_p}. 
\end{cor}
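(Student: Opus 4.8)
The plan is to derive this corollary directly from Theorem \ref{thm: A-spec of S_p} together with the general inequalities recorded in \eqref{eq: spec bounds}, so essentially no new computation is needed. First I would observe that for any fixed $p>0$, Theorem \ref{thm: A-spec of S_p} gives an explicit formula for $\dim_A^\theta S_p$ in both regimes $p(d-1)\le 1$ and $p(d-1)>1$, and in each case the formula equals $d$ for all $\theta$ sufficiently close to $1$ --- indeed, as noted at the start of that proof, $\dim_A^\theta S_p=d$ whenever $\theta\ge p/(p+1)$. So I would pick any $\theta_0\in [p/(p+1),1)$ and conclude $\dim_A^{\theta_0} S_p=d$.

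Next I would invoke the chain of inequalities \eqref{eq: spec bounds}, namely $\dim_A^\theta E\le \dim_{A,reg}^\theta E\le \qad E\le \dim_A E$, applied with $E=S_p$ and $\theta=\theta_0$. This yields $d=\dim_A^{\theta_0} S_p\le \qad S_p\le \dim_A S_p$. For the reverse bound, since $S_p\subset\R^d$, monotonicity of Assouad dimension under inclusion (or simply that covering numbers in $\R^d$ satisfy $N_r(B(x,R)\cap E)\lesssim (R/r)^d$) gives $\dim_A S_p\le d$. Combining, $d\le \qad S_p\le \dim_A S_p\le d$, forcing both to equal $d$.

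There is no real obstacle here; the only point requiring a word of care is that the cited formulas and the relation $\dim_A^\theta S_p=d$ for $\theta\ge p/(p+1)$ were established in Theorem \ref{thm: A-spec of S_p} under the standing hypotheses on $S_p$ (that it is a $p$-concentric sphere collection with $\dim_B S_0=\dim_A S_0=d-1$), which are exactly the hypotheses imposed in the statement ``$S_p$ is as in Theorem \ref{thm: A-spec of S_p}''. I would therefore phrase the proof as: ``By Theorem \ref{thm: A-spec of S_p}, $\dim_A^\theta S_p=d$ for every $\theta\in[p/(p+1),1)$; combining this with \eqref{eq: spec bounds} and the trivial bound $\dim_A S_p\le d$ coming from $S_p\subset\R^d$ gives $\dim_A S_p=\qad S_p=d$.'' That is the entire argument.
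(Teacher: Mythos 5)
Your proof is correct and coincides exactly with the paper's own (very brief) justification: the paper observes that $\dim_A^\theta S_p = d$ for $\theta$ near $1$ by Theorem \ref{thm: A-spec of S_p}, and then cites \eqref{eq: spec bounds} together with the trivial bound $\dim_A S_p \le d$ to conclude. Nothing further to add.
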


We finish this section by showing that a topological sphere of large {  lower} box dimension cannot generate a  polynomially concentric sphere collection.

\begin{proof}[Proof of Theorem \ref{thm: no p-pol concentric sphere coll}]
Assume towards contradiction that $S_p$ is a $p$-polynomially concentric sphere collection, as described in the beginning of Section \ref{sec: dimensions}, but with { $d_0=\lodimB S_0>d-1$}. Note that we do not require $S_0$ to satisfy $\dim_A S_0 =d_0$. We arrive at a contradiction by providing an absurd lower bound on the Assouad spectrum of $S_p$ for certain $\theta\in (0,1)$.

Fix $d_1<d_0$ close to $d_0$, such that $d_1>d-1$, and $\theta\in (0,\frac{p}{p+1})$. Moreover, similarly to the proof of Theorem \ref{thm: A-spec of S_p}, fix small $r\in (0,1)$ and integers  $n_r$, $m_r$ so that \eqref{eq: dimA choice of n_r}, \eqref{eq: dimA choice of m_r theta} are satisfied. Choosing $r$ small enough ensures that $n_r\simeq r^{-\frac{1}{p+1}}$ and $m_r\simeq r^{-\frac{\theta}{p}}$, which by choice of $\theta<p/(p+1)$ also ensures that $m_r\leq n_r$. {  Arguing exactly as in the proof of Theorem \ref{thm: A-spec of S_p}, and by using the left-hand side of \eqref{eq: N_r rel by dimB} and \eqref{eq: N_r similar}, yields}
\begin{equation}\label{eq: imposs Nr}
    N_r(B(0,r^\theta)\cap S_p)\gtrsim \sum_{j=m_r}^{n_r} \frac{j^{-pd_1}}{r^{d_1}}\gtrsim r^{-d_1}\max\{ m_r^{1-pd_1}, n_r^{1-pd_1} \}.
\end{equation} 

Suppose $pd_0>1$. Then $d_1$ can also be chosen so that $p d_1>1$, which implies by \eqref{eq: imposs Nr} that
$$
N_r(B(0,r^\theta)\cap S_p)\gtrsim (r^{\theta-1})^{d_1+\frac{\theta}{p(1-\theta)}},
$$ exactly as in the proof of Theorem \ref{thm: A-spec of S_p}. Since $r$, $\theta$ are arbitrary, this implies that
$$
\dim_A^\theta S_p \geq d_1+\frac{\theta}{p(1-\theta)},
$$ which also yields
\begin{equation}\label{eq: imposs dimA cont 1}
    \dim_A^\theta S_p \geq d_0+\frac{\theta}{p(1-\theta)},
\end{equation}
because $d_1<d_0$ can be arbitrarily close to $d_0$. However, note that $d_0+\frac{\theta}{p(1-\theta)}<d$ if, and only if, $\theta<\frac{p(d-d_0)}{p(d-d_0)+1}$. Since $\frac{p(d-d_0)}{p(d-d_0)+1}<\frac{p}{p+1}$, due to $d_0>d-1$, if we pick
$$
\frac{p(d-d_0)}{p(d-d_0)+1}<\theta<\frac{p}{p+1},
$$ then \eqref{eq: imposs dimA cont 1} implies that $\dim_A^\theta S_p> d,$ which is a contradiction.

On the other hand, if $pd_0\leq 1$, we have $pd_1<1$, which implies by \eqref{eq: imposs Nr} that
$$
N_r(B(0,r^\theta)\cap S_p)\gtrsim (r^{\theta-1})^{\frac{d_1+1}{(1-\theta)(p+1)}}.
$$ This similarly yields
$$
\dim_A^\theta S_p \geq \frac{d_0+1}{(1-\theta)(p+1)},
$$ which by choosing
$$
\frac{d(p+1)-(d_0+1)}{d(p+1)}<\theta<\frac{p}{p+1},
$$ provides the contradicting estimate $\dim_A^\theta S_p>d$ for this case as well, completing the proof.
\end{proof}

\begin{rem}
    For $p<d^{-1}(1+d_0-d)$, if we let $d_1<d_0$ arbitrarily close to $d_0$ and follow the argument of the proof of Theorem \ref{thm: dimB Sp} just for the lower bound of $N_r(S_p)$, we get that $\ovdimB S_p\geq\frac{d_0+1}{1+p}>d$, by choice of $p$, which is a contradiction. Hence, the upper box dimension of $S_p$ can also trace the impossibility of such a concentric collection, but only for certain values of $p$. Therefore, the investigation of the Assouad spectrum is truly necessary.
\end{rem}

\section{Quasiconformal classification of spiral shells} \label{sec: QC-Holder parametrization}

As mentioned in the Introduction, quasiconformal maps are a natural generalization of conformal maps in higher dimensions. There are multiple ways to express the intuition behind quasiconformality, i.e., the property that infinitesimal balls are mapped onto infinitesimal ellipsoids whose eccentricity is globally bounded by a constant. We state the analytic definition of quasiconformal maps, which relates directly to the dimension distortion result of interest (see \cite{Vais_book} for a thorough exposition). A homeomorphism $f:\Omega \to \Omega'$ between domains in $\R^d$, $d \geq 2$, is said to be {\it $K$-quasiconformal}, for some $K\geq 1$, if $f$ lies in the local Sobolev space $W^{1,d}_{\text{loc}}(\Omega:\R^d)$, and the inequality
$$
|Df|^d \le K \det Df
$$
holds almost everywhere in $\Omega$ (with respect to the $d$-Lebesgue measure). Note that $Df$ denotes the (a.e. defined) differential matrix, and $|\mathbf A|=\max\{|\mathbf A(\mathbf v)|:|\mathbf v|=1\}$ denotes the operator norm of a matrix $\mathbf A$. Recall that the local Sobolev space $W^{1,d}_{\text{loc}}(\Omega:\R^d)$ is the space of maps defined in $\Omega$ with first-order weak derivatives in $L^p_{\text{loc}}(\Omega:\R^d)$ (see \cite{Heinonen} for more details).

There are many dimension distortion results, for different analytic and geometric mapping classes related to quasiconformal maps, and regarding different dimension notions. See  \cite{GehringVais}, \cite{Kaufman}, \cite{BaloghAGMS}, \cite{btw:heisenberg}, \cite{OurQCspec}, \cite{SobChron}, \cite{HolomSpecChron} for a non-exhaustive list. In fact, the following theorem proved by Tyson and the author (see \cite[Theorem 1.3]{OurQCspec}) is crucial for the proof of Theorem \ref{thm: main QC shells}.

\begin{thm}\label{thm: ChrTys}
Let $F:\Omega\to\Omega'$ be a $K$-quasiconformal map between domains $\Omega, \,\Omega'\subset \R^d$. For $t>0$, set $\theta(t)=1/(t+1)$. If $E$ is a non-empty bounded set  with closure $\overline{E}\subset\Omega$, and $\alpha_t=\dim_{A,reg}^{\theta(t)}E$, then
\begin{equation}\label{eq: QC spec}
\left( 1 - \frac{d}{s} \right) \left( \frac1{\alpha_{t/K}} - \frac1d \right) 
 \le \frac1{\dim_{A,reg}^{\theta(t)}F(E)} - \frac1d \le 
\left( 1 - \frac{d}{s} \right)^{-1} \left( \frac1{\alpha_{Kt}} - \frac1d \right),
\end{equation} for a fixed $s=s(d,K)>d$ that only depends on the space $\R^d$ and the dilatation constant $K$ of the map $F$.
\end{thm}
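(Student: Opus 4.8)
This is a quasiconformal dimension distortion statement, and the plan is to prove it by the standard three-part recipe --- higher integrability of the Jacobian, quasisymmetric distortion estimates, and a two-scale covering argument --- keeping careful track of the two scales throughout. First I would reduce to a compact setting: since $\overline E\subset\Omega$, fix a compact neighbourhood $A$ of $\overline E$ in $\Omega$, so that every distortion estimate below holds with constants depending only on $d$, $K$ and $\dist(A,\partial\Omega)$. Two inputs will be used repeatedly. (a) \emph{Gehring's higher integrability}: there is $s=s(d,K)>d$ with $\big(\tfrac1{|B|}\int_B|DF|^s\big)^{1/s}\lesssim\big(\tfrac1{|2B|}\int_{2B}|DF|^d\big)^{1/d}$ whenever $2B\subset A$; this is the origin of the exponent $s$ in \eqref{eq: QC spec}, and one takes the single $s$ admissible for both $F$ and $F^{-1}$. (b) \emph{Distortion of balls}: $F|_A$ is $\eta$-quasisymmetric, so images of balls are comparable to balls, and $F$, $F^{-1}$ are locally $\tfrac1K$-H\"older \emph{in every dimension} --- for $F^{-1}$ because it is $K^{d-1}$-quasiconformal and $(K^{d-1})^{1/(1-d)}=K^{-1}$ --- hence $(r/R)^{K}\lesssim \diam F(B(x,r))/\diam F(B(x,R))\lesssim(r/R)^{1/K}$ and, dually, $|F(x)-F(y)|\gtrsim|x-y|^{K}$ locally.

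For the left inequality I would bound $N_\rho\big(B(F(x),R_1)\cap F(E)\big)$ from above, where $R_1\simeq\diam F(B(x,R))$ and $B(F(x),R_1)\cap F(E)\subseteq F(B(x,R)\cap E)$. Cover $B(x,R)\cap E$ by $N:=N_r(B(x,R)\cap E)$ cubes $Q_i$ of side $r$; their images are comparable to balls of radius $d_i:=\diam F(Q_i)$, so $N_\rho(\cdot)\lesssim N+\rho^{-d}\sum_i d_i^d$. Morrey's inequality gives $d_i\lesssim r^{1-d/s}\big(\int_{Q_i}|DF|^s\big)^{1/s}$; summing, using the bounded overlap of the $Q_i$, invoking (a), and bounding $\int_{2B(x,R)}|DF|^d\lesssim K\int_{2B(x,R)} J_F=K|F(2B(x,R))|\lesssim R_1^d$ (quasisymmetry) yields $\sum_i d_i^s\lesssim R_1^s(r/R)^{s-d}$. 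H\"older's inequality then gives $\sum_i d_i^d\le\big(\sum_i d_i^s\big)^{d/s}N^{1-d/s}$; inserting $N\le C(R/r)^{\alpha}$ with $\alpha=\dim_{A,reg}^{\theta'}E$ (legitimate once $r\le R^{1/\theta'}$) and optimizing the free scale $r$ produces $\dim_{A,reg}^{\theta(t)}F(E)\le \alpha s/(s+\alpha-d)$ with $\alpha=\dim_{A,reg}^{\theta(t/K)}E$; the shift $t\mapsto t/K$ is exactly what is needed to convert the target constraint $\rho\le R_1^{1/\theta(t)}$ into a domain constraint $r\le R^{1/\theta(t/K)}$ via the H\"older bound in (b), and the regularization \eqref{eq:regularization} is what allows this (slightly non-sharp) value to be used uniformly over centers and scales. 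The left inequality of \eqref{eq: QC spec} then drops out of the elementary identity $\tfrac1{\alpha s/(s+\alpha-d)}-\tfrac1d=(1-\tfrac ds)(\tfrac1\alpha-\tfrac1d)$.

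The right inequality is the dual statement and is the harder one. Here I would bound $N_\rho\big(B(F(x),R_1)\cap F(E)\big)$ from \emph{below}, starting from domain scales $r\le R^{1/\theta''}$ and a center at which $N_r(B(x,R)\cap E)$ nearly realizes $\dim_{A,reg}^{\theta''}E$, and pushing forward a maximal $r$-separated subset of $B(x,R)\cap E$. The naive estimate --- its image is $\rho$-separated only for $\rho$ down to $R_1(r/R)^{K}$ --- gives merely the lossy $\dim_{A,reg}^{\theta}F(E)\gtrsim\dim_{A,reg}^{\theta}E/K$, which neither sees the exponent $s$ nor produces a shift. To recover the stated form one must again use (a): most of the image cubes $Q_i$ are much larger than $R_1(r/R)^{K}$, and re-feeding $\sum_i d_i^s\lesssim R_1^s(r/R)^{s-d}$ into the count (with a pigeonholing of the $d_i$ by dyadic size) upgrades the estimate to $\dim_{A,reg}^{\theta(t)}F(E)\ge\gamma$ with $\tfrac1\gamma=\tfrac1d+(1-\tfrac ds)^{-1}\big(\tfrac1{\dim_{A,reg}^{\theta(Kt)}E}-\tfrac1d\big)$ --- the conjugate exponent $(1-\tfrac ds)^{-1}$ and the shift $t\mapsto Kt$ being the precise reciprocals of those in the first inequality, and the appearance of $K$ rather than $K^{d-1}$ being legitimate precisely because of the dimension-free bound $|F(x)-F(y)|\gtrsim|x-y|^{K}$ from (b). Rearranging $\tfrac1\gamma-\tfrac1d$ gives exactly the right inequality of \eqref{eq: QC spec}, and continuity of $\theta\mapsto\dim_{A,reg}^\theta(\cdot)$ together with \eqref{eq:regularization} convert the scale-by-scale estimates into the global statement.

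The step I expect to be the main obstacle is this lower bound. A plain push-forward of a separated set loses a full power of $K$ and, crucially, cannot detect the improvement carried by the integrability exponent $s$; recovering the sharp coefficient $(1-d/s)^{-1}$ forces one to quantify how the image cubes distribute across scales, i.e.\ to use $\sum_i(\diam F(Q_i))^s\lesssim R_1^s(r/R)^{s-d}$ in tandem with a careful dyadic sorting of the $Q_i$ by $\diam F(Q_i)$. A secondary difficulty is uniformity: every bound must hold simultaneously for all centers in $F(E)$ and all admissible pairs of scales, which is precisely why one works with the \emph{regularized} spectrum and absorbs the scale-ratio distortion --- a power lying in $[K^{-1},K]$ --- into a monotone reparametrization of $\theta$; this is both the source of, and the reason one can afford, the clean shifts $\theta(t)\mapsto\theta(t/K),\theta(Kt)$.
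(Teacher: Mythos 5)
First, a point of orientation: this paper does not prove Theorem \ref{thm: ChrTys} at all --- it is imported verbatim from the author's earlier work with Tyson (\cite{OurQCspec}, Theorem 1.3), so the only meaningful comparison is with the proof given there. Your treatment of the \emph{left-hand} inequality of \eqref{eq: QC spec} is essentially that argument: Gehring higher integrability as the source of $s$, the Morrey--Sobolev bound $\diam F(Q_i)\lesssim r^{1-d/s}\bigl(\int_{Q_i}|DF|^s\bigr)^{1/s}$, H\"older's inequality with exponents $s/d$ and $(1-d/s)^{-1}$, the regularized spectrum \eqref{eq:regularization} to make the covering bound uniform over centres and admissible scale pairs, and the algebraic identity $\frac{1}{\alpha s/(s+\alpha-d)}-\frac1d=(1-\frac ds)(\frac1\alpha-\frac1d)$. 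That half is sound, modulo the routine but nontrivial check that the optimizing choice of $r$ is compatible with the constraint $r\le R^{1/\theta(t/K)}$, which is exactly where the shift $t\mapsto t/K$ and the local $(1/K)$-H\"older estimates enter.

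The genuine gap is your plan for the right-hand inequality. The estimate $\sum_i(\diam F(Q_i))^s\lesssim R_1^s(r/R)^{s-d}$ is an \emph{upper} bound on the sizes of the image pieces, and no dyadic sorting of the $d_i$ can convert it into a \emph{lower} bound for $N_\rho\bigl(B(F(x),R_1)\cap F(E)\bigr)$: it is perfectly consistent with all image pieces being tiny and clustering inside a single $\rho$-ball. Ruling that out requires quantitative lower distortion, i.e.\ higher integrability for $DF^{-1}$ (equivalently, negative-power integrability of $J_F$), which your sketch never invokes; the only lower-distortion input you use is the crude bound $|F(x)-F(y)|\gtrsim|x-y|^{K}$, which, as you yourself note, yields only the lossy estimate without the exponent $s$. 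The correct (and, in the cited paper, the actual) resolution is duality rather than a new counting scheme: $F^{-1}$ is itself quasiconformal with dilatation controlled by $d$ and $K$, is again locally $(1/K)$-H\"older --- your observation $(K^{d-1})^{1/(1-d)}=K^{-1}$ is exactly the right point --- and has its own Gehring exponent depending only on $d$ and $K$; applying the already-proved first inequality to $F^{-1}$, with $E$ replaced by $F(E)$ and $t$ by $Kt$, gives precisely the right-hand inequality after choosing the single $s=s(d,K)$ to serve both $F$ and $F^{-1}$. So the second half of your proposal should be replaced by this short reduction; as written, the ``pigeonholing upgrade'' is a hope, not a proof.
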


We have all the tools necessary to fully classify the polynomial spiral shells up to quasiconformal equivalence.
\begin{proof}[Proof of Theorem \ref{thm: main QC shells}]
One direction is immediate, as there exists a  map $F$ with $F(\Sh_p)=\Sh_q$, which is $K$-quasiconformal for any $K\geq p/q$. Namely, the radial stretch map $F:\R^d\rightarrow \R^d$ with $F(0)=0$ and
$$
F(x) = |x|^{q/p-1}x,
$$ for all $x\in \R^d\setminus\{0\}$, is $(p/q)$-quasiconformal (see \cite[Section 16.2]{Vais_book}), which implies that $F$ is $K$-quasiconformal for all $K\geq p/q$. Moreover, it is elementary to show that $F$ maps $\Sh_p$ onto $\Sh_q$, as required. 

For the other implication, assume towards contradiction that $F:\R^d\rightarrow \R^d$ is a $K$-quasiconformal map with $K<p/q$ and $F(\Sh_p)=\Sh_q$. We aim to calculate the regularized Assouad spectrum of the shells $\Sh_p, \Sh_q$, and use Theorem \ref{thm: ChrTys} to arrive to a contradiction. Set
$S_p:= \cup_{j=1}^\infty S(0,j^{-p})$ and $S_q:= \cup_{j=1}^\infty S(0,j^{-q})$. The sets $S_p$, $S_q$ are polynomially concentric sphere collections centered at $0$ and generated by the unit sphere $S(0,1)$ in $\R^3$. Hence, by Theorem \ref{thm: A-spec of S_p} we have that
\begin{equation}\label{eq: dimASp QCpf}
    \dim_A^\theta S_p =\left\{
\begin{array}{ll}
      \frac{3}{(1+p)(1-\theta)},\, & \text{if} \,\,\,0<\theta <\frac{p}{p+1}, \,\, p\leq \frac{1}{2} \\
      2+\frac{\theta}{p(1-\theta)},\, & \text{if} \,\,\, 0<\theta <\frac{p}{p+1}, \,\, p> \frac{1}{2}\\
      3,\, & \text{if} \,\,\, \frac{p}{p+1}\leq\theta<1, \\
\end{array} 
\right. 
\end{equation} and similarly for $\dim_A^\theta S_q$.

Note that a bi-Lipschitz map $h_1$ can be constructed in order to map $\Sh_p^{\text{u}}:=\Sh_p\cap \{ 
(x_1,x_2,x_3): \, x_3\geq 0 \}$ onto $S_p^{\text{u}}:= S_p\cap \{ 
(x_1,x_2,x_3): \, x_3\geq 0 \}$ (see \cite{Falc_spirals, Fra_spirals} for similar arguments). Similarly, a different bi-Lipschitz map $h_2$ can be constructed in order to map $\Sh_p^{\ell}:=\Sh_p\cap \{ 
(x_1,x_2,x_3): \, x_3\leq 0 \}$ onto $S_p^{\ell}:= S_p\cap \{ 
(x_1,x_2,x_3): \, x_3\leq 0 \}$. Using the finite and bi-Lipschitz stability of the Assouad spectrum (see \cite[p.~18, 49]{Fra_book}, we have
$$
\dim_A^\theta \Sh_p = \max\{\dim_A^\theta \Sh_p^{\text{u}}, \dim_A^\theta \Sh_p^\ell\}= \max\{\dim_A^\theta S_p^{\text{u}}, \dim_A^\theta S_p^\ell\}= \dim_A^\theta S_p,
$$ for all $\theta\in (0,1)$. Similarly we can show that $\dim_A^\theta \Sh_q= \dim_A^\theta S_q$. Hence, by the above and \eqref{eq: dimASp QCpf}, we can fully determine the Assouad spectrum of $\Sh_p$ and $\Sh_q$. Moreover, since the expressions in \eqref{eq: dimASp QCpf} are monotone, by the relation of the Assouad spectrum with the regularized Assouad spectrum in \eqref{eq:regularization}, we have

\begin{equation}\label{eq: dimAregSp QCpf}
    \dim_{A,reg}^\theta \Sh_p =\left\{
\begin{array}{ll}
      \frac{3}{(1+p)(1-\theta)},\, & \text{if} \,\,\,0<\theta <\frac{p}{p+1}, \,\, p\leq \frac{1}{2} \\
      2+\frac{\theta}{p(1-\theta)},\, & \text{if} \,\,\, 0<\theta <\frac{p}{p+1}, \,\, p> \frac{1}{2}\\
      3,\, & \text{if} \,\,\, \frac{p}{p+1}\leq\theta<1, \\
\end{array} 
\right. 
\end{equation} and similarly for $\dim_{A,reg}^\theta \Sh_q$.

We have all the necessary dimension-theoretic information in order to apply  Theorem \ref{thm: ChrTys}. In fact, we mostly use the relation between the phase transitions $\theta_p=p/(p+1)$ and $\theta_q=q/(q+1)$ of the two shells, i.e., the values where the regularized Assouad spectrum reaches the (quasi-)Assouad dimension. Set $t=1/q$. Then $\theta(t) = 1/(1+t) = q/(1+q)$, and so $\dim_A^{\theta(t)}(S_q) = 3$ by \eqref{eq: dimAregSp QCpf}. On the other hand,
$$
\theta(t/K) = \frac{K}{K+t}= \frac{q K}{qK+1} < \frac{p}{p+1},
$$ 
due to the fact that $K<p/q$. Therefore, $\dim_A^{\theta(t/K)}(S_p) < 3$. This leads to a contradiction due to Theorem \ref{thm: ChrTys} and, in particular, due to the left-hand side of \eqref{eq: QC spec}. Therefore, such a $K$-quasiconformal map does not exist.
\end{proof}

\begin{rem}
    A closer analysis of the above proof indicates that Theorem \ref{thm: main QC shells} could be generalized to non-global quasiconformal maps. More specifically, the result is still true if the map $F:\R^d\rightarrow \R^d$ in the statement of the theorem is replaced by a map $F:\Omega\rightarrow\Omega'$ between domains of $\R^d$ such that $\Sh_p\cup \{0\}\subset \Omega$ and $\Sh_q\cup \{0\}\subset \Omega'$.
\end{rem}

We emphasize that the Assouad spectrum estimates are crucial in the proof of Theorem \ref{thm: main QC shells}. Indeed, in the case $1/2\leq q\leq p$, all other dimension notions do not provide non-trivial information, since $\dim_H \Sh_p=\dim_B \Sh_p=2<3=\dim_A \Sh_p$, where $\dim_H$ denotes the Hausdorff dimension (see \cite{falconer}), and similarly for $\Sh_q$. The available dimension distortion results for the Hausdorff, box, and Assouad dimension under quasiconformal maps are not enough to classify such polynomial shells. See \cite{OurQCspec} for a more extensive discussion on this phenomenon in the case of polynomial spirals.

\section{Final Remarks}\label{sec: Final Remarks}

Suppose $S_0=f(S(0,1))\subset \R^d$ is a topological sphere, where $f:\R^d\rightarrow \R^d$ is a homeomorphism, and $x_0$ is a point lying in the interior of the topological ball $f(B(0,1))$. We address the existence of a concentric sphere collection generated by $S_0$, regardless of the value of $\dim_B S_0\in [d-1,d]$. Since $f(B(0,1))$ is an open set, there is a maximal open ball lying in $f(B(0,1))$, centered at $x_0$ and of radius $r_1>0$. Set $g_1$ to be a similarity, say of ratio $r_1/100$, such that $x_0\in g_1(B(0,1))\subset B(x_0,r_1)$. Repeating the same argument for $g_1(B(0,1))$ and iterating infinitely many times, we construct a sequence of similarities $\{g_n\}_{n\in \N}$ that contracts $S_0$ into the point $x_0$, i.e., a concentric sphere collection centered at $x_0$ and generated by $S_0$. However,  by Theorem \ref{thm: no p-pol concentric sphere coll}, we have established that the similarity ratios of $g_n$ cannot be comparable to any sequence of the form $n^{-p}$, for any $p>0$, {  unless $\lodimB S_0=d-1$. Thus, in the case $\lodimB S_0\in (d-1,d]$,} the sequence $a_n$ in the constructed example has to converge to $0$ faster than $n^{-p}$ does. Note that in such situations, the convergence is typically too fast for any dimension notion to trace  ``fractalness" in the collection. For instance, following the method of the proof of Theorem \ref{thm: A-spec of S_p}, it can be shown that a concentric sphere collection with sequence $a_n=e^{-n}$ has Assouad spectrum (and, similarly, Assouad dimension) equal to $\dim_B S_0$. 
A natural question motivated by this discussion is to try and fully characterize the topological spheres $S_0$ from which a polynomially concentric collection can be generated. Given a topological sphere $S_0\subset \R^d$, Theorem \ref{thm: no p-pol concentric sphere coll} shows that $\lodimB S_0=d-1$ is a necessary condition. If $\lodimB S_0=d-1$, under what conditions can we polynomially contract $S_0$ into a point using similarities? Are there specific restrictions imposed on the degree $p$ of the polynomial rate $n^{-p}$, based on properties of $S_0$?

It should be emphasised that the case $p>(d-1)^{-1}$ in Theorem \ref{thm: A-spec of S_p} is another one of the few examples of sets $E\subset \R^d$, where $\dim_A^\theta E$ is not constant and is not equal to $\dim_B E/(1-\theta)$, for $\theta<1-\dim_B E/d$ (see Lemma \ref{le: dimA upper bd}). Many of the first such examples mostly appeared in $\R^2$, such as the polynomial spirals studied by Fraser in \cite{Fra_spirals}, and the elliptical polynomial spirals studied by Falconer-Fraser-Burrell in \cite{Falc_spirals} . In fact, the examples studied in the latter paper stand out for one more reason; for fixed $p,q$, the Assouad spectrum of elliptical spirals $\Sh_{p,q}$ exhibits two distinct phase transitions, i.e., $\dim_A^\theta \Sh_{p,q}$ has three different branches as a function of $\theta$, unlike $\dim_A^\theta \Sh_p$, which only has two (see \cite[Theorem 2.6]{Falc_spirals}). This motivates the study of another class in $\R^d$, namely that of \textit{concentric ellipsoid collections}. In particular, this class is defined similarly to the concentric sphere collections in Definition \ref{def: concentric}, but with the second condition (ii) replaced by $\dist(S_n, S_{n+1})= c_2 b_n$, for some strictly decreasing sequence $\{b_n\}_{n\in\N}$ that converges to $0$ at a different rate than $\{a_n\}_{n\in\N}$. We expect the dimension study of these objects to be quite interesting, potentially providing  more examples of sets with Assouad spectrum of two distinct phase transition values.

Recall that a homeomorphism $f:\R^d\rightarrow \R^d$ is $(\alpha,\beta)$-\textit{bi-H\"older continuous} on a non-empty set $E$, with H\"older exponents $0<\alpha\leq 1\leq \beta<\infty$, if there is a constant $C>0$ such that
$$
C^{-1}|x-y|^\beta\leq |f(x)-f(y)|\leq C |x-y|^\alpha,
$$ for all $x,y\in E$.
Fraser in \cite{Fra_spirals} suggested the following classification-like direction: given two bounded sets $X,Y\subset \R^d$, try to construct a bi-H\"older map $f$ such that $f(X)=Y$. Moreover, determine what is the sharpest possible choice of H\"older exponents for such a map (which depends on properties of $X, Y$), and compare them to the bounds on the exponents from dimension distortion theorems. We have indirectly stepped towards this direction for spiral shells through Theorem \ref{thm: main QC shells}, and the connection between quasiconformal and bi-H\"older maps.
Namely,  any $K$-quasiconformal map $F:\R^d\rightarrow\R^d$ is in fact $(1/K,K)$-bi-H\"older continuous on every compact set $E\subset \R^d$ (see for instance \cite[Theorem 7.7.1]{Iwaniec}). Hence, in the proof of Theorem \ref{thm: main QC shells}, we provided a $(q/p,p/q)$-bi-H\"older map that maps $\Sh_p\cup\{0\}$ onto $\Sh_q\cup\{0\}$. However, while this map is sharp for the quasiconformal classification of spiral shells, it is not clear whether it is sharp for the bi-H\"older class of maps between such shells. 

One could use the bi-H\"older dimension distortion results for the Assouad spectrum (see \cite[Section 3.4.3]{Fra_book}), and try to come up with bounds on the H\"older exponents of a map that sends $\Sh_p$ onto $\Sh_q$, in terms of $p$ and $q$. For instance, if $f:\R^d\rightarrow\R^d$ is $(\alpha,\beta)-$bi-H\"older and $E\subset \R^d$ is bounded,  then  
$$
\frac{\dim_B E}{\beta}\leq \dim_B f(E)\leq \frac{\dim_B E}{\alpha}.
$$
Therefore, in the case $0<q<p<1$, using the the box dimension of $\Sh_p, \Sh_q$ and the above inequality, we arrive at the trivial bound $\beta\geq (q+1)/(p+1)$, and the bound $\alpha\leq (q+1)/(p+1)$ for the H\"older exponents. This does not necessarily imply that a $\frac{q+1}{p+1}$-H\"older map $f$ with $f(\Sh_p)=\Sh_q$ actually exists. Note that for $0<q<p$ we have
$$
\frac{q}{p}<\frac{q+1}{p+1},
$$ so the latter term is indeed a ``better" H\"older exponent, in the sense that it leads to a ``smoother" map $f$.
However, it appears that apart from certain specialized examples, the H\"older dimension distortion estimates are rarely sharp. This was also emphasized in the H\"older unwinding problem for polynomial spirals by Fraser in \cite{Fra_spirals}. As a result, it is not unreasonable to suspect that $q/p$ is the sharpest exponent possible, at least for the classification problem involving $(\alpha,\alpha^{-1})$-bi-H\"older maps between spiral shells $\Sh_p,\Sh_q$. In fact, a natural direction to explore is the following; suppose the sharp bi-H\"older exponent bounds in a classification problem are not achieved through knowledge of the dimensions of the sets and the H\"older dimension distortion estimates, are the bounds achieved by the corresponding quasiconformal classification results  sharp?

\end{document}